\newtheorem{theo}{Theorem}[section]
\theoremstyle{definition}
\newtheorem{definition}[theo]{Definition}
\theoremstyle{plain}
\newtheorem{lemma}[theo]{Lemma}
\newtheorem{theorem}[theo]{Theorem}
\newtheorem{proposition}[theo]{Proposition}
\theoremstyle{definition}
\newtheorem{remark}[theo]{Remark}
\newcommand{\beq}{\begin{equation}}
\newcommand{\eeq}{\end{equation}}
\renewcommand{\a}{\alpha}
\renewcommand{\b}{\beta}
\renewcommand{\d}{\delta}
\newcommand{\g}{\gamma}
\renewcommand{\o}{\omega}
\newcommand{\ga}{\mathfrak{a}}
\renewcommand{\gg}{\mathfrak{g}}
\newcommand{\gh}{\mathfrak{h}}
\newcommand{\gk}{\mathfrak{k}}
\newcommand{\gl}{\mathfrak{l}}
\newcommand{\gm}{\mathfrak{m}}
\newcommand{\gn}{\mathfrak{n}}
\newcommand{\gq}{\mathfrak{q}}
\newcommand{\gs}{\mathfrak{s}}
\newcommand{\gt}{\mathfrak{t}}
\newcommand{\gu}{\mathfrak{u}}
\newcommand{\gz}{\mathfrak{z}}
\newcommand\SU{\mathrm{SU}}
\renewcommand{\square}{\kern1pt\vbox
{\hrule height 0.6pt\hbox{\vrule width 0.6pt\hskip 3pt
\vbox{\vskip 6pt}\hskip 3pt\vrule width 0.6pt}\hrule height0.6pt}\kern1pt}
\DeclareMathOperator\Ad{Ad}
\DeclareMathOperator\ad{ad}
\newcommand{\n}{\nabla}
\newcommand{\be}{\begin{equation}}
\newcommand{\ee}{\end{equation}}
\def\<#1,#2>{\langle\,#1,\,#2\,\rangle}
\newcommand{\arr}{\begin{array}{rlll}}
\newcommand{\ea}{\end{array}}
\newcommand{\bea}{\begin{eqnarray}}
\newcommand{\eea}{\end{eqnarray}}
\newcommand{\bean}{\begin{eqnarray*}}
\newcommand{\eean}{\end{eqnarray*}}
\def\sideremark#1{\ifvmode\leavevmode\fi\vadjust{
\vbox to0pt{\hbox to 0pt{\hskip\hsize\hskip1em
\vbox{\hsize3cm\tiny\raggedright\pretolerance10000
\noindent #1\hfill}\hss}\vbox to8pt{\vfil}\vss}}}
\newcounter{ssig}
\newcounter{ttig}
\title[Homogeneous Hermitian manifolds and special metrics] {Homogeneous Hermitian manifolds and special metrics}
\author{Fabio Podest\`a}
\address{Dipartimento di Matematica e Informatica "Ulisse Dini", Universit\`a di Firenze, V.le Morgagni 67/A, 50100 Firenze, Italy}
\email{podesta@unifi.it}
\keywords{Homogeneous complex manifold, Chern connection, special metrics.}
\begin{document}
\begin{abstract} We consider non-K\"ahler compact complex manifolds which are homogeneous under the action of a compact Lie group of biholomorphisms and we investigate the existence of special (invariant) Hermitian metrics on these spaces. We focus on a particular class of such manifolds comprising the case of Calabi-Eckmann manifolds and we prove the existence of an invariant Hermitian metric which is Chern-Einstein, namely whose second Ricci tensor of the associated Chern connection is a positive multiple of the metric itself. The uniqueness is also discussed.  \end{abstract}

\maketitle
\section{Introduction}
A generalized flag manifold, namely a simply connected homogeneous space $M=G/K$ where $G$ is a compact Lie group and $K$ is the centralizer of a torus in $G$, can be endowed with invariant complex structures and invariant K\"ahler metrics. Once we fix an invariant complex structure $J$ on $M$, it is well known that there exists precisely one invariant K\"ahler-Einstein metric which is somehow canonically associated to $(M,J)$. A simply connected complex homogeneous space $G/H$ where $G$ is compact and $H$ is a compact subgroup is K\"ahler precisely when $H$ coincides with the centralizer of a torus in $G$, while the problem of finding special (invariant) Hermitian metrics on non-K\"ahler $G/H$ is far from being obvious. \par 
Given an Hermitian manifold $(M,J,g)$, there are several connections $D$ which leave both the metric $g$ and the complex structure $J$ parallel. Among them, the Chern connection is the only one with such a property and moreover the torsion $T$ being of type $(2,0)$. The curvature tensor $R$ of the Chern connection can be traced in two different ways yielding two different Ricci tensors $S^{(1)}$ and $S^{(2)}$ which are both Hermitian. While the $(1,1)$ form $\rho$ which can be associated to $S^{(1)}$ is closed and represents the first Chern class $c_1(M)$, the form associated to $S^{(2)}$ is not even closed and there are no obvious relations between these two tensors. When the second Chern-Ricci $S^{(2)}$ is positive definite (or at least nonnegative and positive at least at one point), then the Hodge numbers $h^{p,0}=0$ and therefore the arithmetic genus $\chi(M,\mathcal O)=1$ (see \cite{LY}). More recently (\cite{LY}) the second Chern-Ricci tensor has been involved in defining a Hermitian flow $\frac d{dt}h_t = -S^{(2)}(h_t)$ (called HCF in the sequel), which preserves the Hermitian structure, is strictly parabolic and coincides with the K\"ahler-Ricci flow whenever the initial metric is K\"ahler. This flow is actually a simplified version of the hermitian curvature flow introduced and studied by Streets and Tian (\cite{ST}) and in \cite{U} it has been recently proved that on a compact Hermitian manifold the HCF preserves the Griffiths non-negativeness of the Chern curvature. \par 
From this point of view, Hermitian metrics $h$ which are Chern-Einstein, namely whose second Ricci tensor $S^{(2)}$ satisfies $ S^{(2)} = \mu h$ for some $\mu\in C^\infty(M)$, are a distinguished class of metrics which has been first introduced in \cite{G1} and which deserves a special attention. It can be proved that on a generalized flag manifold there might exist several Chern-Einstein invariant metrics beyond the standard K\"ahler-Einstein metric, which on the other hand turns out to be the only one in some particular cases.\par 
In this paper we start the investigation of the existence of special metrics on compact simply connected complex homogeneous spaces and in particular we focus on a special subclass $\mathcal C$ of such homogeneous manifolds that includes the Calabi-Eckmann manifolds. A complex manifold in the class $\mathcal C$ is a ${\rm T}^2$-bundle over the product of two compact Hermitian symmetric spaces and can be endowed with a two-parameter family of inequivalent invariant complex structures. We prove that these manifolds, which are non-K\"ahler, do not satisfy the $\partial\bar\partial$-lemma, do not support any balanced nor SKT metrics, while for every invariant complex structure there exists an invariant Chern-Einstein metric with $\mu=1$. We also prove that this special metric is unique whenever the complex structure belongs to a suitable neighborhood of the so called standard complex structure on the manifold.  \par 
In Section 2 we recall some basic facts about the compact complex manifolds which are homogeneous under the action of a compact Lie group of biholomophisms. We discuss the $\partial\bar\partial$-lemma and we then focus on special homogeneous manifolds, called {\rm M}-manifolds, and a special subclass $\mathcal C$ 
which comprises the Calabi-Eckmann manifolds. We then discuss the existence of balanced metric on {\rm M}-manifolds or their suitable products.\par 
In Section 3 we review some basic notions about the Chern connection, we introduce the definition of Chern-Einstein metric and give some basic properties. We then state our main result, as Theorem \eqref{main}, where we state that a manifold in the class $\mathcal C$ carries an invariant Hermitian Chern-Einstein metric, but no balanced, nor SKT metric. We then describe the Chern connection of an invariant metric and its curvature algebraically, providing then a proof of our main result. We conclude with a remark on the behaviour of the HCF in a suitable neighborhood of a Chern-Einstein solution on a particular manifold in $\mathcal C$. \par 
In Section 4 we discuss the existence of invariant balanced metrics on compact complex homogeneous spaces.\par 
 \bigskip
\noindent{\bf Acknowledgment.} We like to thank Andrea Spiro, Daniele Angella and Valentino Tosatti for valuable conversations.

\section{Homogeneous complex manifolds}
Let $M$ be a compact complex manifold with complex structure $J$ and let $G$ be a
compact connected Lie group acting almost effectively, transitively and holomorphically on $(M,J)$. We
will write $M = G/L$ for some compact subgroup $L$. \par
The complexified group $G^c$ acts holomorphically on $G/L$, so that $M=G^c/U$ for
some complex subgroup $U \subset G^c$.
It is well known that the {\em Tits fibration} $\phi$ provides a
holomorphic fibering of the homogeneous space $M$ onto a compact rational homogeneous space $Q:=G^c/P$, where the
parabolic subgroup $P$ is in general defined to be the normalizer
$N_{G^c}(U^o)$ of $U^o$ (see \cite{Ak}). \par 
We will now suppose that $G$ is {\it semisimple} and that $M$ is supposed to be simply connected. Then $U$ (and $L$) is connected and the fibres of $\phi$ are complex tori. The flag manifold $G^c/P$ can be written as
$G/H$ endowed with a $G$-invariant complex structure $I$,
where $H$ is the centralizer of some torus in $G$.
Accordingly the Lie algebra $\gg$ can be decomposed as
\begin{equation}\label{dec}
\gg=\gl\oplus\underbrace{\gt\oplus\gn}_{\gm}\,,\end{equation}
where $\gm$ identifies with the tangent space $T_{[eL]}M$, $\gt$ with the tangent space of the fiber $T_{[eL]}F$, $F=\phi^{-1}([eH])$, $\gh=\gl\oplus\gt$ and $\gn$ is an $\Ad(H)$-invariant complement of
$\gh$ in $\gg$. The fiber $F$ being a complex torus implies 
$[\gt,\gt]=\{0\}$. Moreover the algebra $\gh$ is contained in the normalizer of $\gl$ in $\gg$
by construction, hence $[\gl,\gt]\subset\gl \cap \gt = \{0\}$ and $\gt$ is in the center of $\gh$.\par 
We can choose a Cartan subalgebra $\gs$ of the form $\gs=\gt_\gl^c\oplus
\gt^c$, where $\gt_\gl$ is a maximal abelian subalgebra of $\gl$.
Denote by $R$ the corresponding root system of $\gg^c$, by $R_\gl$ the subsystem relative to $\gl$ so that
$\gl^c = \gt_\gl^c \oplus \bigoplus_{\alpha\in R_\gl}\gg_\alpha$, and by $R_\gn$ the
symmetric subset of $R$ such that $\gn^c=\bigoplus_{\alpha \in R_\gn}\gg_\alpha$. The $G$-invariant complex structure
$I$ induces an endomorphism of $\gn^c$ that is $\Ad(H)$-invariant and therefore the corresponding subspace
$\gn^{1,0}$ is a sum of root spaces. The integrability of $I$ is equivalent to the condition
$$[\gn^{1,0},\gn^{1,0}]_{\gn^c} \subseteq \gn^{1,0}$$
and one can prove (see e.g. \cite{BFR}) that there is a suitable ordering of $R_\gn=R_\gn^+ \cup R_\gn^-$ such that
$$\gn^{1,0} = \bigoplus_{\alpha\in R_\gn^+} \gg_\alpha, \quad
\gn^{0,1} = \bigoplus_{\alpha\in R_\gn^-} \gg_\alpha.$$
The $G$-invariant complex structure $J$ on $G/L$ induces an $\Ad(L)$-invariant endomorphism, still denoted by $J$,
of $\gm^c$, where $\gm := \gt + \gn$. It leaves both $\gt$ and $\gn$ invariant with $J|_\gn =  I$ and
the integrability of $J$ is equivalent to the vanishing of the Nijenhuis tensor $N_J$, namely  for $X,Y\in \gm$
\begin{equation}\label{NI}
[JX,JY]_\gm - [X,Y]_\gm - J[JX,Y]_\gm - J[X,JY]_\gm = 0.
\end{equation}
Equation \eqref{NI} is trivial for $X,Y\in \gt$ and with $X\in \gt$ and
$Y\in \gn$ it reduces to the $\ad(\gt)$-invariance of $I$. When $X,Y\in \gn$, then \eqref{NI} is the integrability of $I$ 
because $[\gn^{1,0},\gn^{1,0}]\subseteq \gn^{1,0}$. \par
Viceversa, we start with a decomposition as in
\eqref{dec}, where $\gl+\gt = \gh$ and $\gh$ is the centralizer of an abelian
subalgebra.
If we fix an $\ad(\gh)$-invariant integrable complex
structure $I$ on $\gn$ and we extend it by choosing an arbitrary complex structure $J_\gt$ on $\gt$,
then $J_\gt + I$ will provide an integrable $L$-invariant complex structure $J$ on the homogeneous space $G/L$.\par 
Note that $(G/L,J)$ is K\"ahler if and only if $\gt=\{0\}$, i.e. $L=H$. We are mainly interested in the non-K\"ahler case.  
\medskip
\begin{proposition} The compact complex manifold $(G/L,J)$ does not satisfy the $\partial\bar\partial$-Lemma if it is not K\"ahler.\end{proposition}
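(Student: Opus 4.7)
The plan is to exhibit a nonzero Dolbeault class $[\omega]\in H^{0,1}_{\bar\partial}(M)$ and derive a contradiction. The $\partial\bar\partial$-lemma forces the Fr\"olicher spectral sequence to degenerate at $E_1$, so $b_1(M)=h^{1,0}+h^{0,1}\ge h^{0,1}$. Since $M$ is simply connected, $b_1(M)=0$, hence any nonzero class in $H^{0,1}_{\bar\partial}(M)$ will give the required contradiction.

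Since $(G/L,J)$ is not K\"ahler we have $\gt\neq\{0\}$. I would pick any nonzero $\eta\in(\gt^{0,1})^*$ and extend it by zero on $\gn^{0,1}$, so that $\eta$ becomes an element of $(\gm^{0,1})^*$. Because $[\gl,\gt]=0$ the subspace $\gt$ is fixed by $\Ad(L)$, and since $\gn$ is already $\Ad(L)$-invariant, $\eta$ is $\Ad(L)$-invariant. It therefore defines a $G$-invariant $(0,1)$-form $\omega$ on $M=G/L$.

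The main step is to verify that $\bar\partial\omega=0$. Using the algebraic formula $d\omega(X,Y)=-\eta(\mathrm{pr}_{\gt^{0,1}}[X,Y]_\gm)$ valid on invariant $1$-forms, one only has to check the $(0,2)$-component, running over $X,Y\in\gm^{0,1}=\gt^{0,1}\oplus\gn^{0,1}$. The bracket on $\gt\times\gt$ is zero since $\gt$ is abelian, and $[\gt,\gn]\subset\gn$ gives no contribution to $\gt^c$. The one delicate case is $X\in\gg_\alpha,\,Y\in\gg_\beta$ with $\alpha,\beta\in R_\gn^-$: the bracket lies in $\gg_{\alpha+\beta}$ when $\alpha+\beta$ is a root, and in the Cartan only if $\alpha+\beta=0$; but $\alpha+\beta=0$ would force $\beta=-\alpha\in R_\gn^+$, contradicting $\beta\in R_\gn^-$. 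Thus $[\gn^{0,1},\gn^{0,1}]\cap\gt^c=\{0\}$, and so $\bar\partial\omega=0$. This small root-theoretic observation is the only substantive point in the proof.

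Non-exactness of $\omega$ follows by averaging: if $\omega=\bar\partial h$ for some $h\in C^\infty(M)$, averaging $h$ over $G$ (which commutes with $\bar\partial$ because $J$ is $G$-invariant) gives $\omega=\bar\partial h_{\mathrm{avg}}$ with $h_{\mathrm{avg}}$ a $G$-invariant, hence constant, function on the transitive space $M$. Therefore $\omega=0$, contradicting $\eta\neq 0$. This yields $h^{0,1}(M)\geq 1$, incompatible with $b_1(M)=0$ together with Fr\"olicher degeneration, completing the argument.
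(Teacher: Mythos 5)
Your proof is correct, but it takes a genuinely different route from the paper's. The paper tests the $\partial\bar\partial$-lemma directly on one explicit form: it takes $\omega=d\xi^*$, where $\xi^*$ is the $B$-dual of a nonzero $\xi\in\gt$, checks by a root computation that $\omega$ is a nonzero invariant form of type $(1,1)$ (it pairs nontrivially only $\gn^{1,0}$ with $\gn^{0,1}$, and $\a(\xi)\neq 0$ for some $\a\in R_\gn$), notes it is $d$-exact by construction, and then uses the same averaging trick you use to rule out $\omega=\partial\bar\partial f$ (an invariant form with a potential has an invariant, hence constant, potential); this contradicts the lemma verbatim, with no external input and no use of $b_1=0$. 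You instead build a nonzero Dolbeault class: the invariant $(0,1)$-form supported on $\gt^{0,1}$, whose $\bar\partial$-closedness reduces to the correct observation that $[\gn^{0,1},\gn^{0,1}]$ never meets the Cartan directions (and $[\gt,\gn]\subset\gn$, $[\gt,\gt]=0$), with non-exactness again by averaging; you then invoke the Deligne--Griffiths--Morgan--Sullivan consequence that the $\partial\bar\partial$-lemma forces $E_1$-degeneration of the Fr\"olicher spectral sequence, so $b_1\geq h^{0,1}\geq 1$, contradicting $b_1=0$ from the standing simple-connectedness assumption. What your route buys is the reusable extra fact $h^{0,1}(G/L,J)\geq 1$; what it costs is dependence on simple connectedness and on the (standard but nontrivial) degeneration theorem, whereas the paper's argument is self-contained, needs only the invariant Koszul formula for $d$, and applies the $\partial\bar\partial$-lemma in its raw form to a single $d$-exact $(1,1)$-form.
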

\begin{proof} We fix a nonzero element $\xi\in \gt$ and consider the $\Ad(L)$-invariant element $\xi^*\in \gg^*$ given by the dual of $\xi$ w.r.t. the
$\Ad(G)$-invariant inner product $B$ on $\gg$. We consider the $2$-form $\omega=d\xi^*$, where we still denote by $\xi^*$ the $G$-invariant $1$-form 
on $M$ determined by $\xi^*$. We claim that $\omega$ is a non zero $(1,1)$-form and that it cannot be written as $\o = \partial\bar\partial f$ for 
$f\in C^\infty(M)$. The last assertion is clear, because $\o$ is $G$-invariant and the function $f$ can be chosen to be invariant as well, hence a constant. We show that $\o$ is not trivial. Indeed, if $\a\in R_\gn$ we have
$$\o(E_\a,E_{-\a}) = -B(\xi,[E_\a,E_{-\a}]) = B([E_\a,\xi],E_{-\a}) = -\a(\xi)B(E_\a,E_{-\a}).$$
We select a root $\a\in R_\gn$ so that $\a(\xi)\neq 0$ and our claim follows. In order to prove that $\o$ is of type $(1,1)$, we complexify it and 
observe that $\o(X,Y)\neq 0$ if and only if $X\in \gn^{1,0}$ and $Y\in \gn^{0,1}$ and therefore $\o(JX,JY) = \o(X,Y)$ holds. \end{proof}
While flag manifolds are K\"ahler and do have a special (invariant) metric which is represented by the unique K\"ahler-Einstein metric, for non-K\"ahler homogeneous spaces the question about the existence of special (invariant) Hermitian metrics is meaningful and deserves a special investigation.\par 
Following \cite{W}, an interesting class of complex homogeneous spaces is provided by {\rm M}-manifolds and their products. Given a compact simply connected Lie group $G$, a
{\rm M}-manifold is a $G$-homogeneous space of the form $G/L$ where $L$ is a subgroup of $G$ which coincides with the semisimple part of the centralizer of a torus in $G$. Using the fact that the semisimple $L$ has finite fundamental group we see that $G/L$ is simply connected and has finite second fundamental group. Moreover an even-dimensional {\rm M}-manifold and the product of two odd-dimensional ones carry infinitely many non-equivalent $G$-invariant complex structures (see \cite{W}). Simple examples of this situation is given by the Calabi-Eckmann manifolds, which can be described in group theoretic way as $\SU(n_1)\times\SU(n_2)/(\SU(n_1-1)\times \SU(n_2-1))\cong S^{2n_1-1}\times S^{2n_2-1}$.\par 
\begin{proposition}\label{bal0} The even-dimensional {\rm M}-manifolds or the product of two odd-dimensional {\rm M}-manifolds do not admit any balanced metric.\end{proposition}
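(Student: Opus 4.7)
The plan is to argue by contradiction by integrating an exact top form. Assume a balanced metric exists; after averaging over $G$, I may take its fundamental form $\Omega$ to be $G$-invariant, since the balanced condition $d\Omega^{n-1}=0$ is linear and hence preserved by averaging.

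The heart of the argument is to reuse the exact $G$-invariant $(1,1)$-form $\omega_\xi:=d\xi^*$ from the proof of Proposition 2.1, but now with $\xi\in\gt$ chosen so that $\omega_\xi$ is semipositive and nonzero on $\gm$. At the origin $\omega_\xi$ vanishes on $\gt\times\gt$ (because $\gt$ is abelian) and on $\gt\times\gn$ (because $[\gt,\gn]\subseteq\gn$ and $\xi^*$ annihilates $\gn$); its only nontrivial component lives on $\gn\times\gn$, where the formula $\omega_\xi(E_\alpha,E_{-\alpha})=-\alpha(\xi)B(E_\alpha,E_{-\alpha})$ recorded in Proposition 2.1 shows that $\omega_\xi$ becomes positive definite on $\gn$ as soon as $\xi$ is chosen so that $\alpha(\xi)$ has a definite sign for every $\alpha\in R_\gn^+$. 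Here the {\rm M}-manifold hypothesis enters decisively: since $L$ is semisimple, $\gt$ is the entire center of $\gh$, and the invariant complex structure $I$ on $\gn$ corresponds to a Weyl chamber sitting inside $\gt$, so a suitable regular $\xi$ can be found in $\gt$ itself. For the product of two odd-dimensional {\rm M}-manifolds, the same reasoning is applied factorwise and the resulting elements are summed.

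The contradiction then comes from computing $\int_M\omega_\xi\wedge\Omega^{n-1}$ in two ways. On the one hand, the pointwise identity expressing $\omega_\xi\wedge\Omega^{n-1}$ as the trace of $\omega_\xi$ with respect to $\Omega$ times $\Omega^n/n$, together with the fact that $\omega_\xi$ is a nonzero semipositive Hermitian form, shows this trace is everywhere strictly positive, so the integral is strictly positive. On the other hand, $\omega_\xi=d\xi^*$ is exact and $d\Omega^{n-1}=0$ by the balanced condition, so $\omega_\xi\wedge\Omega^{n-1}=d(\xi^*\wedge\Omega^{n-1})$ and Stokes' theorem forces the integral to vanish.

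The only non-formal step, and therefore the main obstacle, is producing the dominant $\xi\in\gt$: for a general reductive stabiliser $L$ the center of $\gh$ strictly contains $\gt$, and the Weyl-chamber element governing $I$ need not lie in $\gt$ at all, which is exactly why the {\rm M}-manifold hypothesis is needed. Everything else in the plan — averaging, the pointwise positivity of $\omega_\xi\wedge\Omega^{n-1}$, and the Stokes integration — is routine once $\xi$ has been fixed.
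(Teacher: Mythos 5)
Your argument is correct, but it is a genuinely different proof from the one in the paper. The paper's proof is purely topological: since $M$ is simply connected with finite $\pi_2$, Poincar\'e duality gives $H_{2n-2}(M;\mathbb R)=0$, so the preimage $\pi^{-1}(Z)$ of a complex hypersurface $Z$ in the projective base $Q$ is a compact complex hypersurface of $M$ that bounds, and integrating $\o^{n-1}$ over it gives simultaneously $0$ (Stokes, using $d\o^{n-1}=0$) and a positive number. You instead produce a Michelsohn-type obstruction: the exact invariant form $\o_\xi=d\xi^*$ of Proposition 2.1, with $\xi\in\gt$ chosen in the chamber determined by $I$ (e.g.\ via the Koszul element $\delta_\kappa=\tfrac12\sum_{\a\in R_\gn^+}H_\a\in\sqrt{-1}\,\gz(\gh)$, which the paper itself uses in Section 4 and which pairs positively with every $H_\a$, $\a\in R_\gn^+$), is a nonzero semipositive $(1,1)$-form, and pairing it against $\O^{n-1}$ gives a positive integral on one hand and zero by Stokes on the other. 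Your identification of where the M-manifold hypothesis enters is exactly right: one needs the chamber element to lie in $\gt$ rather than merely in $\gz(\gh)$, which forces $\gl$ to be semisimple, and this is precisely what fails in the K\"ahler case $L=H$. What each approach buys: the paper's argument is shorter and needs no positivity analysis, only the existence of a divisor in $Q$ and the homological vanishing; yours is more self-contained on the Lie-theoretic side, reuses Proposition 2.1, and in fact never needs the metric to be invariant, since only $\o_\xi$ must be invariant to guarantee semipositivity and nonvanishing at every point. Two small corrections to your write-up: the averaging step is both unnecessary and misjustified --- the balanced condition $d(\O^{n-1})=0$ is not linear in $\O$, and the correct statement (the one cited from \cite{FG}) averages $\O^{n-1}$ and extracts an $(n-1)$-st root \`a la Michelsohn; and since roots take imaginary values on the compact form, the positivity condition should read $-\sqrt{-1}\,\a(\xi)>0$ for all $\a\in R_\gn^+$ rather than ``$\a(\xi)$ has a definite sign''.
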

\begin{proof} Indeed, let $M$ be such a manifold, which is the total space of a holomorphic toric fibration $\pi$ over a flag manifold $Q$. Since $M$
is simply connected and has finite second fundamental group, we see that $H_{2n-2}(M)$ ($n=\dim_{\mathbb C}M$) is trivial. Suppose $M$ admits a balanced metric whose K\"ahler form therefore satisfies $d(\o^{n-1})=0$. If $Z\subset Q$ is a codimension one compact submanifold of $Q$, then $\tilde Z = \pi^{-1}(Z)$ is a codimension one compact submanifold of $M$ that bounds and therefore $\int_{\tilde Z} \o^{n-1} = 0$, a contradiction. \end{proof}
\begin{remark} Note that the $G$-invariance of any balanced metric in the above proposition is not assumed - however, when a balanced metric exists, we can always find an invariant one (see \cite{FG}). Note also that this result has to be contrasted with the K\"ahler case. Indeed, {\it any} invariant Hermitian metric $h$ on a flag manifold $G/K$ {\it is} balanced, since the codifferential $\delta\o$ of the K\"ahler form $\o$ of $h$ is a $G$-invariant $1$-form and a flag manifold supports no non-trivial invariant $1$-forms.\par 
In Section 4 we will give a more detailed description of invariant balanced metrics. \end{remark}

We will now focus on a particular class $\mathcal C$ of homogeneous non-K\"ahler complex spaces given by a product of two {\rm M}-manifolds. This class $\mathcal C$ comprises the Calabi-Eckmann manifolds. We first describe them as a homogeneous space and then we study special hermitian invariant metrics on them. \par 
We consider two irreducible compact Hermitian symmetric spaces $G_1/T^1\cdot H_1$, $G_2/T^1\cdot H_2$, where $G_1,G_2$ are two compact simply connected simple Lie groups. The product of the corresponding {\rm M}-manifolds provides a homogeneous manifold 
$$M:= (G_1/H_1) \times (G_2/H_2)$$
which can be endowed with a family of invariant complex structures, already considered in \cite{T} (see also the more recent results in \cite{ST1} concerning also non invariant complex structures). Indeed, we consider the Cartan decompositions 
$$\gg_i = \underbrace{\mathbb R\cdot Z_i \oplus \gh^s_i }_{\gh_i} \oplus \gn_i,\qquad [\gn_i,\gn_i]\subseteq \gh_i,\qquad i=1,2,$$
where  $\gh^s_i$ denotes the simple part of $\gh_i$ and $Z_i$ in the center of $\gh_i$ determines the complex structure $I_i$ on $\gn_i$ by $I_i = \ad(Z_i)$. Therefore we have $\gl := \gh^s_1\oplus \gh^s_2$ and $\gt$ is spanned by $Z_1,Z_2$. The complex structure $J_\gt\in \mbox{End}(\gt)$ can be represented by the matrix $\left(\begin{smallmatrix} a& {\frac{-1-a^2}b}\\ b&-a\end{smallmatrix}\right)$ w.r.t. the basis $\{Z_1,Z_2\}$, where $a,b\in\mathbb R$, $b\neq 0$. The complex structure $J_o$ with $a=0,b=1$ will be called {\it standard}. \par 

\section{ The Chern connection and the main Theorem}
\par \medskip
Given a Hermitian manifold ($M,h,J$), the associated Chern connection is the unique hermitian connection, i.e. which leaves $h$ and $J$ parallel, and such that its torsion tensor $T$ is of type $(2,0)$, namely 
\beq\label{tor}T(JX,Y) = JT(X,Y)\eeq
for every vector fields $X,Y$ on $M$. Since the torsion of any hermitian connection on a complex manifold has vanishing $(0,2)$-component, \eqref{tor} is equivalent to (see \cite{G}) 
\beq \label{tor1} T(JX,JY)=-T(X,Y)\eeq
for every $X,Y$. This in turn is equivalent to saying that $T(Z,\overline W)=0$ for sections $Z,W\in \Gamma(T^{10}M)$. The curvature $R$ is a section of 
$\Lambda^{1,1}(T^*M)\otimes\gu(TM)$ and in local holomorphic coordinates it has the expression 
$$R_{i\bar j k \bar l } = -\frac{\partial h_{k\bar l}}{\partial z_i\partial \overline{z}_j} + h^{p\bar q}\frac{\partial h_{k\bar q}}{\partial z_i}
\frac{\partial h_{p\bar l}}{\partial \bar z_j}.$$
The curvature $R$ can be traced in two different ways. The {\it first} Ricci tensor $S^{(1)}$ is defined by tracing the endomorphism part, namely 
$$S^{(1)}_{i\bar j} = h^{p\bar q}R_{i\bar jp\bar q} = -\frac{\partial^2 \log(\det(h))}{\partial z_i\partial \overline{z}_j} $$
and its associated $(1,1)$ form is closed and represents the first Chern class $c_1(M)$. The second Ricci tensor is given by the trace 
$$S^{(2)}_{i\bar j} = h^{p\bar q}R_{p\bar qi\bar j}$$
and still there exists an associated $(1,1)$ form, which is not necessarily closed. The two Ricci tensors differ by a term which depends on the covariant derivative of the torsion (see e.g. Lemma 2.4 in \cite{ST}). It is known (see \cite{LY},\cite{KW})  that when $S^{(2)}$ is positive definite (negative definite resp.) the Hodge numbers $h^{p,0}=0$ for $p=1,\ldots,\dim_{\mathbb C}M$ ($M$ has no holomorphic vector fields resp.). We are led to the following definition, which was first considered in \cite{G1}
\begin{definition} A Hermitian metric $h$ is called Chern-Einstein if there exists $\mu \in C^\infty(M)$ so that 
$$S^{(2)} = \mu\cdot  h.$$
\end{definition}
\begin{remark} It follows using general formulas (see e.g. \cite{G2}, ~p. 501) that a metric conformal to a K\"ahler-Einstein metric turns out to be Chern-Einstein. For this reason the original definition in \cite{G1} included the hypothesis that the metric is Gauduchon, namely $\partial\bar\partial(\o^{n-1}) =0$, in order to exclude this less significant situation. In case of homogeneous manifold, it is clear that $\mu$ has to be constant and indeed every invariant Hermitian metric is Gauduchon. In \cite{LY} and \cite{G1}  it is shown that the canonical metric on the Hopf manifold $S^{2n+1}\times S^1$ is Chern-Einstein with $\mu >0$. This also shows that, in contrast with the K\"ahler-Einstein case, the existence of a positive Chern-Einstein metric does not imply the simply connectedness of the manifold. Nevertheless, a compact complex manifold $M$ with finite fundamental group is simply connected when it carries a positive Chern-Einstein metric. This indeed follows from the the fact the aritmetic genus  $\chi(M,\mathcal O) =1$ and this invariant is actually multiplicative with finite coverings.\end{remark} 
\begin{remark} We also recall that there exists a third Ricci tensor $S^{(3)}$ which is defined as $S^{(3)}_{\a,\bar\b} := h^{p\bar q}R_{\a\bar q p\bar\b}$. The Einstein condition $S^{(1)} = \mu h$ or $S^{(3)} = \mu h$ for some constant $\mu\neq 0$ is easily seen to imply $h$ to be K\"ahler (see \cite{B}).
\end{remark}
We may now state our main result
\begin{theorem}\label{main} Let $M$ be a manifold in the class $\mathcal C$ endowed with an invariant complex structure $J$. Then $M$ is simply connected, non K\"ahler and 
\begin{itemize}
\item[a)] $M$ does not admit any balanced or SKT Hermitian metric;
\item [b)] $M$ admits an invariant Hermitian metric $\bar g$ which is Chern-Einstein with $S^{(2)}(\bar g) = \bar g$. Moreover, if $J$ belongs to a suitable neighborhood of the standard complex structure , the metric $\bar g$ is the only invariant Chern-Einstein metric satisfying $S^{(2)}(\bar g) = \bar g$;
\item[c)] $c_1(M)\geq 0$.
\end{itemize}
\end{theorem}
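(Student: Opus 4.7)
The plan is to exploit the explicit Lie-theoretic description of the class $\mathcal C$ to reduce every assertion to a finite-dimensional calculation on the reductive decomposition $\gg=\gl\oplus\gt\oplus\gn$, where $\gg=\gg_1\oplus\gg_2$, $\gl=\gh_1^s\oplus\gh_2^s$, $\gt=\bR Z_1\oplus\bR Z_2$ and $\gn=\gn_1\oplus\gn_2$ come from the two Cartan decompositions, with each $\gn_i$ irreducible as an $\ad(\gh_i^s)$-module. Because each factor $G_i/H_i$ is an $S^1$-bundle over a (complex, hence even-dimensional) Hermitian symmetric space, it is odd-dimensional, so $M$ is a product of two odd-dimensional M-manifolds. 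An invariant Hermitian metric $g$ compatible with $J$ is therefore determined by an $\Ad(L)$-invariant, $J$-Hermitian inner product on $\gm$: by Schur's lemma it depends on two positive scalars $c_1,c_2>0$ (the scalings of the Killing form on $\gn_1$ and $\gn_2$) together with a $J_\gt$-Hermitian inner product on the two-dimensional space $\gt$.

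For part (a), the non-existence of a balanced metric is Proposition \ref{bal0} applied to the product of two odd-dimensional M-manifolds, combined with the averaging result of \cite{FG} that allows one to restrict to invariant metrics. For the non-existence of SKT metrics I would argue by contradiction: after averaging I may assume the metric is invariant, and then compute $\partial\bar\partial\omega$ on invariant quadruples of the form $(E_\alpha,E_{-\alpha},\xi,\eta)$ with $\alpha\in R_{\gn_i}$ and $\xi,\eta\in\gt$. Using $[\gn_i,\gn_i]\subset\gh_i$ and the centrality of the $Z_i$ in $\gh_i$, the relevant scalar reduces to a nonzero combination of the eigenvalues $\alpha(Z_i)$ and of the metric scalars $c_i$, contradicting the SKT condition.

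The heart of the theorem is part (b). Using Nomizu-type formulas for the Chern connection of a reductive homogeneous space, one writes the Chern curvature $R$ and its second trace $S^{(2)}$ algebraically in terms of the brackets of $\gg$, of $J=J_\gt+I$, and of the parameters above. The $\Ad(L)$-invariance and Schur's lemma force $S^{(2)}$ to split as a scalar multiple of $g|_{\gn_i}$ on each $\gn_i$ plus a symmetric bilinear form on $\gt$, so the equation $S^{(2)}=\mu g$ reduces to a system in five real unknowns (the two parameters $c_1,c_2$ and the three coefficients of the $J_\gt$-Hermitian form on $\gt$) depending on the complex-structure parameters $(a,b)$. I expect the two $\gn$-equations to admit a unique positive solution once the metric on $\gt$ is fixed, and the three remaining equations on $\gt$ to be solvable for every admissible $(a,b)$ with $\mu>0$; a global rescaling then normalizes $\mu=1$. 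For the local uniqueness around the standard $J_o$ I would invoke the implicit function theorem: at $J_o$ the equations decouple because $J_\gt Z_1=Z_2$, so the solution can be computed explicitly and the Jacobian of the Chern-Einstein system checked to be non-degenerate there.

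Finally, for part (c), once $\bar g$ is available the first Chern-Ricci form $\rho$ is a closed invariant $(1,1)$-form representing $c_1(M)$, and it can be extracted from $S^{(2)}$ via the identity $S^{(1)}-S^{(2)}=(\text{divergence of the Chern torsion})$ recalled in Lemma~2.4 of \cite{ST}. The Chern torsion of $\bar g$ is controlled by the brackets $[\gn_i,\gn_i]\subset\gh_i$, $[\gt,\gn_i]\subset\gn_i$ and $[\gt,\gt]=0$; these relations force the correction $S^{(1)}-S^{(2)}$ to be non-negative on $\gn\times\gn$ and to vanish identically on $\gt\times\gt$, so that $\rho\geq 0$ on all of $\gm$. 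The main technical obstacle, on which all three parts ultimately depend, is the explicit algebraic expression of $S^{(2)}$ for arbitrary $(a,b)$ and the determination of the sign of the torsion correction; once this computation is in hand, (a), (b) and (c) all follow as described.
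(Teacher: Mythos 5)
Your general framework --- reducing every statement to the reductive decomposition $\gg=\gl\oplus\gt\oplus\gn$ and using $\Ad(L)$-invariance and Schur's lemma to turn the Chern--Einstein condition into a finite algebraic system --- is the same as the paper's, but several of your concrete steps would fail. First, the SKT argument: the quadruples $(E_\a,E_{-\a},\xi,\eta)$ with $\xi,\eta\in\gt$ give no information. In the Koszul formula the term coming from $[E_\a,E_{-\a}]$ evaluates $d^c\o$ on three vectors of the two-dimensional space $\gt$ and hence vanishes, while the terms coming from $[\gt,\gn]\subset\gn$ cancel in pairs, so $dd^c\o$ is identically zero on such quadruples and no contradiction arises. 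The paper instead evaluates $dd^c\o$ on $(E_\a,E_{-\a},E_\b,E_{-\b})$ with $\a,\b\in R_{\gn_1}$, which forces $h(Z_1,Z_1)=0$. Second, existence in (b): your parameter count is off (a $J_\gt$-Hermitian form on the complex one-dimensional $\gt$ is a single positive scalar, so there are three unknowns $g_1,g_2,h_o$ and three scalar equations), and, more importantly, ``I expect the equations to be solvable'' is exactly the point that needs proof; the paper eliminates variables, reduces to a single cubic $\phi(x)=0$, and obtains existence from the sign change of $\phi$ on the admissible interval. Third, uniqueness: the implicit function theorem at $J_o$ only yields a locally unique branch of solutions near $\bar g$, whereas the theorem claims that, for $J$ near $J_o$, $\bar g$ is the \emph{only} invariant Chern--Einstein metric with constant $1$ among all invariant metrics; excluding other admissible roots of the cubic requires the paper's global root-counting (all roots lie in $[2,2+2/n_1]$, discriminant estimates for $\phi'$, the case analysis in $n_1,n_2$), which your sketch does not supply.

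Finally, part (c): your key claim that the torsion correction $S^{(1)}-S^{(2)}$ vanishes on $\gt\times\gt$ is false. Since $\gt$ is abelian and $\Lambda(v)=\ad(v)$ for $v\in\gt^c$, one gets $R(H,\bar H)=0$ and hence $\rho(H,\bar H)=0$, while $S^{(2)}(H,\bar H)>0$ (it equals $h_o$ for the Einstein metric); thus the correction is strictly negative in the fiber direction, and your route would lead to the wrong conclusion that $\rho>0$ there. The paper avoids the comparison with $S^{(2)}$ altogether and computes $\rho$ directly from the invariant curvature formula, obtaining $\rho(E_\a,\bar E_\a)=\tfrac12$ on the root directions and $\rho(H,\bar H)=0$, whence only $c_1(M)\ge 0$.
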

Before starting with the proof of the main Theorem, we describe the Chern connection of an invariant Hermitian metric and prove some basic facts.\par

Given an invariant Hermitian metric $h$ on a complex homogeneous space $G/L$, we will describe its associated Chern connection $\nabla$. \par 
We see $h$ as an $\Ad(L)$-invariant inner product $h$ on the $\Ad(L)$-invariant complement $\gm$ with $\gg = \gl \oplus \gm$. Moreover $h$ is supposed to be Hermitian w.r.t. the invariant complex structure $J$ on $\gm$. Being $G$-invariant, the torsion $T$ can be seen as an element of $\Lambda^2\gm\otimes\gm$ and after complexification, the condition \eqref{tor1} is equivalent to
\beq\label{tor3} T(\gm^{10},\gm^{01})=0.\eeq   
Since $\nabla$ is an invariant connection on $G/L$, it is well known that it is completely determined by a map $\Lambda\in \mbox{Hom}(\gm,\mbox{End}(\gm))$, where the 
correspondence can be described as follows. If $X\in \gg$ we denote by $X^*$ the corresponding vector field on $M=G/L$ and observe that the map 
$\gm\ni X\mapsto (X^*)|_{[eL]} \in T_{[eL]}M$ is an isomorphism. Then for $X,Y\in \gm$ and $p=[eL]\in M$,
$$(\Lambda(X)Y)^*|_p = (\nabla_{X^*}Y^* - [X^*,Y^*])|_p.$$
\begin{lemma} The condition $\nabla J = 0$ implies $[\Lambda(X),J]=0$.
\end{lemma}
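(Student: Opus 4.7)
The plan is to unpack $\nabla J = 0$ into an algebraic identity on $\gm$ by evaluating $\nabla J$ at the base point $p = [eL]$. Since both $\nabla$ and $J$ are $G$-invariant, so is $\nabla J$; hence $\nabla J = 0$ if and only if $(\nabla_{X^*} J)|_p = 0$ for every $X \in \gm$, because $\{X^*|_p : X \in \gm\}$ spans $T_p M$.

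First I would promote the Nomizu-type formula given in the statement from fundamental vector fields to arbitrary local vector fields, namely
$$\nabla_{X^*} V\big|_p \;=\; \Lambda(X)\bigl(V|_p\bigr) \;+\; [X^*, V]\big|_p$$
for any vector field $V$ defined near $p$ and any $X \in \gm$. This reduces to the hypothesis $(\Lambda(X) Y)^*|_p = (\nabla_{X^*} Y^* - [X^*, Y^*])|_p$ via the Leibniz rule: writing $V = \sum_i f_i Y_i^*$ in a basis $\{Y_i\}$ of $\gm$ and expanding, the terms $X^*(f_i)|_p \cdot Y_i$ assemble with $\sum_i f_i(p)\, [X^*, Y_i^*]|_p$ to reconstruct $[X^*, V]|_p$. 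Applying the formula twice, to $V = Y^*$ and to $V = J Y^*$, and using the pointwise identity $(J Y^*)|_p = J Y$, one obtains
$$(\nabla_{X^*} J)(Y^*)\big|_p \;=\; [\Lambda(X), J](Y) \;+\; \bigl([X^*, J Y^*] - J[X^*, Y^*]\bigr)\big|_p.$$

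The final step is to recognise the last bracketed expression as $(\cL_{X^*} J)(Y^*)|_p$, which vanishes: indeed $X^*$ generates the one-parameter subgroup $\exp(tX) \subset G$ acting on $M$, and $J$ is preserved by $G$, so $\cL_{X^*} J = 0$. Therefore $\nabla J = 0$ forces $[\Lambda(X), J](Y) = 0$ for all $X, Y \in \gm$, which is the claim. The only subtle point is the extension of the Nomizu identity beyond fundamental vector fields, which is needed because $J Y^*$ is not itself a fundamental vector field in general; once that extension is in place, the rest is bookkeeping that isolates the commutator and cancels the $G$-invariant Lie-derivative remainder.
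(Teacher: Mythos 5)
Your proof is correct, and it reaches the conclusion by a genuinely different organization of the computation than the paper's. The paper never extends the Nomizu-type formula beyond fundamental vector fields: it computes $\nabla_{X^*}(JY)^*|_p$ by applying the torsion identity $\nabla_A B=\nabla_B A+[A,B]+T(A,B)$ twice (first passing to $\nabla_{(JY)^*}X^*=\nabla_{JY^*}X^*$ at $p$, then back again), so that the two torsion terms cancel by skew-symmetry; $\nabla J=0$ enters as $\nabla_{X^*}(JY^*)=J\nabla_{X^*}Y^*$, and the leftover bracket is converted via $[X^*,JY^*]=J[X^*,Y^*]$ --- the same invariance fact you package as $\mathcal{L}_{X^*}J=0$ --- after which comparison with the defining formula for $\Lambda(X)(JY)$ gives the claim. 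You instead prove the extension $\nabla_{X^*}V|_p=\bigl(\Lambda(X)V|_p\bigr)^*|_p+[X^*,V]|_p$ for arbitrary local fields $V$, which is legitimate since the fields $Y_i^*$, with $\{Y_i\}$ a basis of $\gm$, frame $TM$ near $p$ and the formula is compatible with the Leibniz rule as you verify; applying it to $Y^*$ and to $JY^*$ isolates $[\Lambda(X),J](Y)$ plus the remainder $(\mathcal{L}_{X^*}J)(Y^*)|_p$, which vanishes because $G$ acts holomorphically. Your route avoids the torsion altogether and makes explicit exactly which two inputs are used ($\nabla J=0$ and $G$-invariance of $J$), at the price of the extension lemma; the paper's route stays within the formula already stated for fundamental fields but requires the longer chain of torsion substitutions. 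Substantively the two arguments rest on the same ingredients, so this is a difference of presentation, with yours arguably the cleaner one.
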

\begin{proof} Using the definition of $J$ as an endomorphism of $\gm$ (namely $(JX)^*|_p = J X^*|_p$) we see that 
$$(\n_XJY)^*|_p = \n_{X^*}(JY)^* = (\n_{(JY)^*}X^* + [X^*,(JY)^*] + T(X^*,(JY)^*))|_p = $$
$$= (\n_{JY^*}X^* + [X^*,(JY)^*] + T(X^*,JY^*))|_p =$$
$$= (J\n_{X^*}Y^* + [J Y^*,X^*] + T(J Y^*,X^*) + [X^*,(JY)^*] + T(X^*,JY^*))|_p =$$
$$= (J\n_{X^*}Y^* - J[X^*,Y^*])|_p  + [X^*,(JY)^*]|_p = J(\Lambda(X)Y)^*|_p + [X^*,(JY)^*]|_p$$
and our claim follows.\end{proof}
Therefore when we extend $\Lambda$ to an element $\Lambda\in \mbox{Hom}(\gm^c,\mbox{End}(\gm^c))$, we have that $\Lambda(\cdot)(\gm^{10})\subseteq\gm^{10}$.
The torsion is given by (see \cite{KN},~p.192) 
$$T(X,Y) = \Lambda(X)Y - \Lambda(Y)X - [X,Y]_\gm$$
and condition \eqref{tor3} implies for $A,B\in \gm^{10}$ 
$$0 = (\Lambda(A)\bar B - \Lambda(\bar B)A - [A,\bar B])^{10} = $$
$$ = (- \Lambda(\bar B)A - [A,\bar B])^{10},$$
i.e.
\beq \Lambda(\bar B)A = (\Lambda(\bar B)A)^{10} =  [\bar B,A]^{10}.\eeq
Conjugation yields 
\beq\label{AbarB} \Lambda(A)\bar B = [A,\bar B]^{01}.\eeq
\subsection{The proof of the main Theorem}
\medskip
We already know that any manifold $M$ in $\mathcal C$ does not admit any balanced metric. We start here with some generalities in order to prove our main result.\par   
Using the same notations as above, we consider the Cartan subalgebra $\ga_i$ in $\gh_i^c$ given by $\gt_i^c\oplus \gs_i$, where $\gt_i = \mathbb R\cdot Z_i$ and $\gs_i$ is a Cartan subalgebra of $(\gh^s_i)^c$ for $i=1,2$. We denote by $R^{(i)}$ the corresponding root systems, which are then endowed with an invariant ordering corresponding to the invariant complex structure $I_i$ on $\gn_i$ ($i=1,2$). Then $R^{(i)}= R_{\gh_i}\cup R_{\gn_i}^+ \cup R_{\gn_i}^-$. In the sequel we will extend each root $\a\in R^{(i)}$ to a functional on $\ga_1\oplus \ga_2$ by putting $\a|_{\ga_j}\equiv 0$ if $i\neq j$.\par 
We are interested in studying special Hermitian metrics on these manifolds. We recall that a Hermitian invariant metric $h$ on $M$ is given by an Hermitian 
$\Ad(L)$-invariant inner product on $\gm = \gt +\gn$, where $\gn = \gn_1\oplus \gn_2$. Moreover, whenever $\dim\gh_i > 1$, $i=1,2$, the tangent space 
$\gm$ splits as the sum of three inequivalent, and therefore $h$-orthogonal, submodules $\gt,\gn_1,\gn_2$. We put $n_i:= \dim_\mathbb C\gn_i$ for $i=1,2$.\par 
Moreover, by $L$-irreducibility, the metric $h$ on each $\gn_i$ is a negative multiple  of the restriction of the Cartan Killing form $B_i$ of $\gg_i$ on $\gn_i$ ($i=1,2$). We will also denote by $H$ a non zero element of $\gt^{10}$, say $H= Z_1 - iJ_{\gt}Z_1$, and we  put 
$$h_o := h(H,\bar H)>0.$$
Note that if $\alpha,\b$ are  positive roots in $R_{\gn_i}^+$ then by the $\Ad(L)$-invariance for every $v\in \gs_i$ 
$$0=h([v,E_\a],E_{-\b}) + h(E_\a,[v,E_{-\b}]) = (\a-\b)(v)\cdot h(E_\a,E_{-\b}).$$
Since $\a\neq\b$ implies that $\a\not\equiv\b$ on $\gs_i$, we see that $h(E_\a,E_{-\b}) \neq 0$ only when $\a=\b$. In this case $h(E_\a,\bar E_\a) := g_i$ when we use the normalized root vectors $\{E_\a\}$ given by a Chevalley basis (recall also that $\bar E_\a = - E_{-\a}$). \par\medskip
\begin{lemma} $M$ does not admit any SKT metric.\end{lemma}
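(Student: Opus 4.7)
The strategy is to average to an invariant metric, then use the fact that the ``horizontal'' summands $\omega_{\gn_1},\omega_{\gn_2}$ are $d$-closed to localize the obstruction in $\omega_\gt$.

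By Haar-averaging over $G$ (the SKT condition $\partial\bar\partial\omega=0$ is linear and the positive cone is convex), any SKT metric on $M$ can be replaced by a $G$-invariant one (cf.\ the remark after Proposition \ref{bal0}), so we work in the invariant setting. Under the standing assumption $\dim\gh_i>1$ for $i=1,2$, the $\Ad(L)$-inequivalence of $\gt,\gn_1,\gn_2$ forces every invariant Hermitian $(1,1)$-form to read
\[
\omega \;=\; h_o\,\omega_\gt + g_1\,\omega_{\gn_1} + g_2\,\omega_{\gn_2}, \qquad h_o,g_1,g_2>0,
\]
where $\omega_\gt=i\,\eta\wedge\bar\eta$ and $\omega_{\gn_i}=i\sum_{\alpha\in R_{\gn_i}^+}\eta_\alpha\wedge\bar\eta_\alpha$.

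A Maurer-Cartan computation using the Hermitian-symmetric vanishing $[\gn_i^{1,0},\gn_i^{1,0}]=0$ together with $[\gg_1,\gg_2]=0$ shows that, for $\alpha\in R_{\gn_i}^+$, the only non-vanishing contributions to $d\eta_\alpha$ arise from $[H,E_\alpha]$ and $[\bar H,E_\alpha]$, and a direct Leibniz check then gives $d(\eta_\alpha\wedge\bar\eta_\alpha)=0$. Hence $d\omega_{\gn_i}=0$ and in particular $\partial\bar\partial\omega_{\gn_i}=0$ (conceptually, $\omega_{\gn_i}$ is pulled back along the Tits fibration from the canonical K\"ahler form of the $i$-th Hermitian symmetric factor). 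The SKT condition on $\omega$ therefore reduces to the single equation $h_o\,\partial\bar\partial\omega_\gt=0$.

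To see $\partial\bar\partial\omega_\gt\neq 0$, observe that no bracket $[U,V]_\gm$ with $U,V\in\gm^{1,0}$ has a non-zero $H$-component (the only brackets producing the $\gt$-direction are $[E_\gamma,E_{-\gamma}]\subset\gt_i^c\oplus\gs_i$), so the $(2,0)$-part of $d\eta$ vanishes; Leibniz then yields $\partial\bar\partial\omega_\gt=i\,d\eta\wedge d\bar\eta$. Moreover $d\eta$ has a non-zero coefficient on $\eta_\gamma\wedge\bar\eta_\gamma$ for every $\gamma\in R_{\gn_1}^+\cup R_{\gn_2}^+$, because $\eta(Z_1)=1/2$, $\eta(Z_2)=(i-a)/(2b)\neq 0$, and the $Z_i$-component of $H_\gamma$ is non-zero (as $\gamma(Z_i)=i\neq 0$). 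Since $\dim\gh_i>1$ forces $|R_{\gn_i}^+|\geq 2$, one can pick two distinct roots $\gamma\neq\gamma'$ in the same $R_{\gn_i}^+$; the corresponding monomial $\eta_\gamma\wedge\bar\eta_\gamma\wedge\eta_{\gamma'}\wedge\bar\eta_{\gamma'}$ then acquires a non-zero coefficient in $d\eta\wedge d\bar\eta$, forcing $h_o=0$ and contradicting positivity of $\omega$. The only delicate step is the Chevalley-Eilenberg identification of $d\eta$ and $d\eta_\alpha$; everything else is a finite check of wedge products.
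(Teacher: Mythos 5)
Your argument is correct where it applies, and it is a genuinely different route from the paper's: the paper never decomposes $\omega$ at all, but simply evaluates $dd^c\omega$ via the Koszul formula on the quadruple $(E_\alpha,E_{-\alpha},E_\beta,E_{-\beta})$ for two \emph{distinct} roots $\alpha,\beta\in R_{\gn_1}$, obtaining $h((H_\alpha)_\gm,(H_\beta)_\gm)=0$ and hence $h(Z_1,Z_1)=0$, a contradiction. Your version — split $\omega=h_o\omega_\gt+g_1\omega_{\gn_1}+g_2\omega_{\gn_2}$, observe that $\omega_{\gn_i}$ is the pullback of the K\"ahler form of the $i$-th symmetric base (hence $d$-closed), and reduce SKT to $\partial\bar\partial(\eta\wedge\bar\eta)\neq 0$, computed as $d\eta\wedge d\bar\eta$ with $d\eta=\sum_\gamma c_\gamma\,\eta_\gamma\wedge\bar\eta_\gamma$, $c_\gamma\propto\eta\big((H_\gamma)_{\gt^c}\big)\neq 0$ — is more structural and makes transparent where the obstruction lives; I checked the type argument ($\partial\eta=0$, so $\partial\bar\partial\eta=0$) and the coefficient of $\eta_\gamma\wedge\bar\eta_\gamma\wedge\eta_{\gamma'}\wedge\bar\eta_{\gamma'}$ for $\gamma\neq\gamma'$ in the same $R_{\gn_i}^+$, which is $-(c_\gamma\bar c_{\gamma'}+c_{\gamma'}\bar c_\gamma)=-2|c_\gamma|^2\neq0$ since the $c$'s coincide within a factor, so the conclusion $h_o=0$ is sound.

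The genuine gap is the scope of your ``standing assumption'' $\dim\gh_i>1$ for $i=1,2$. The paper only says the three-block orthogonal splitting holds \emph{whenever} $\dim\gh_i>1$; it is not an assumption built into the class $\mathcal C$, and the Lemma is asserted for every $M\in\mathcal C$. If, say, $\gh_2^s=0$ (so $\gn_2$ has complex dimension $1$, e.g.\ the Calabi--Eckmann manifolds $S^{2n_1-1}\times S^3$), then $\gl=\gh_1^s$ acts trivially on both $\gt$ and $\gn_2$, these are equivalent (trivial) $\gl$-modules, and an invariant Hermitian metric may mix them; your reduction of the SKT equation to the single term $h_o\,\partial\bar\partial\omega_\gt$ then breaks down, since the mixed $\gt$--$\gn_2$ components of $\omega$ can contribute to the same monomials. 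The paper's direct evaluation needs no orthogonality and covers exactly these cases (it only needs two distinct roots in one $R_{\gn_i}$, i.e.\ $n_1\geq2$ or $n_2\geq2$). Note finally that when $n_1=n_2=1$ (i.e.\ $M=S^3\times S^3$) both your argument and the paper's are silent — there is then only one positive root per factor — and indeed in that case a bi-invariant metric on $\SU(2)\times\SU(2)$ compatible with the standard structure is SKT, so the distinct-root (equivalently $n_i\geq 2$) hypothesis is essential and cannot be argued away.
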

\begin{proof} Suppose $h$ is a SKT metric, which can be supposed to be invariant using the compactness of the group $G$. The SKT condition amounts to say that $dd^c \o = 0$, where $\o$ is the corresponding K\"ahler form. We will use the Koszul formula for the differential of an invariant $q$-form $\phi$, where for $v_0,\ldots,v_q\in \gm$ 
$$d\phi(v_o,\ldots,v_q) = \sum_{i<j}(-1)^{i+j}\phi([v_i,v_j]_\gm,v_0,\ldots,\tilde v_i,\ldots,\tilde v_j,\ldots, v_q),$$
where $\tilde{\cdot} $ indicates that the corresponding vector does not appear. Select $\a,\b\in R_{\gn_1}$ and we compute 
$$d(d^c\o)(E_\a,E_{-\a},E_\b,E_{-\b}) = - d^c\o((H_\a)_\gm,E_\b,E_{-\b}) - d^c\o((H_\b)_\gm, E_\a,E_{-\a}) = $$
$$= - d\o(J(H_\a)_\gm,E_\b,E_{-\b}) - d\o(J(H_\b)_\gm, E_\a,E_{-\a}).$$
Now
$$d\o(J(H_\a)_\gm,E_\b,E_{-\b}) = - \o(\b(J(H_\a)_\gm)E_\b,E_{-\b}) - \o(\b(J(H_\a)_\gm)E_{-\b},E_\b) - \o((H_\b)_\gm,J(H_\a)_\gm) = - h((H_\a)_\gm,(H_\b)_\gm)$$
is symmetric in $\a,\b$ and therefore we get 
$$0=d(d^c\o)(E_\a,E_{-\a},E_\b,E_{-\b}) = 2 g((H_\a)_\gm,(H_\b)_\gm).$$
Now $(H_\a)_\gm = \frac{B_1(H_\a,Z_1)}{B_1(Z_1,Z_1)} Z_1 = -\frac{\sqrt{-1}}{2n_1}Z_1$, so that last equation implies $h(Z_1,Z_1) =0$ , a contradiction. 
\end{proof}

In order to prove the existence of a Chern-Einstein metric, we prove the following

\begin{lemma}\label{Lambda} Given $\a,\b\in R_{\gn_i}^+$, $\g\in R_{\gn_j}^+$ with $i\neq j$, we have
\begin{itemize}
\item[a)] $\Lambda(E_\a)E_\b = 0$,\quad $\Lambda(E_\a)E_{\pm\g} = 0$ and $\Lambda(E_\a)\bar H=0 $;
\item[b)] $\Lambda(E_\a)E_{-\b} = 0$ for $\b\neq \a$;\quad $\Lambda(E_\a)E_{-\a} = -\frac {\sqrt{-1}}{2n_i}(Z_i)^{01}$; 
\item[c)] $\Lambda(E_\a)H= \frac{h(H,H_\a)}{g_i}E_\a$;
\item[d)] $\Lambda(v) = \ad(v)$ for $v\in\gt^c$.

\end{itemize}

\end{lemma}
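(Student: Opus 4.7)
The proof is a direct algebraic computation that exploits four constraints on the connection function $\Lambda$: (i) $\operatorname{Ad}(L)$-equivariance, forcing $\Lambda(X)Y$ to lie in the weight space of weight $\mathrm{wt}(X)+\mathrm{wt}(Y)$ under the Cartan $\gs_1\oplus\gs_2$ of $\gl^c$; (ii) $[\Lambda(X),J]=0$, preserving the splitting $\gm^c=\gm^{10}\oplus\gm^{01}$; (iii) the torsion identity \eqref{AbarB} and its conjugate $\Lambda(\bar B)A=[\bar B,A]^{10}$ for $A,B\in\gm^{10}$; and (iv) the $h$-skewness $h(\Lambda(X)Y,Z)+h(Y,\Lambda(X)Z)=0$, which follows from $\nabla h=0$ by the standard Nomizu computation for invariant connections on reductive homogeneous spaces. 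The algebraic input from the Hermitian symmetric structure is $[\gn_i^{10},\gn_i^{10}]=0$ (equivalently, every root of $\gg_i^c$ has $\operatorname{ad}(Z_i)$-eigenvalue in $\{-\sqrt{-1},0,\sqrt{-1}\}$), $[\gn_i^+,\gn_i^-]\subseteq\gh_i$, $[\gg_1,\gg_2]=0$, and $[\gt,\gt]=0$.

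Parts (a) and (b) are then obtained case by case. For $\a,\b\in R_{\gn_i}^+$, $\Lambda(E_\a)E_\b\in\gm^{10}$ has weight $\a+\b$; since $(\a+\b)(Z_i)=2\sqrt{-1}$ this is not a root of $\gg_i^c$, and no vector in $\gm^{10}=\gt^{10}\oplus\gn_1^{10}\oplus\gn_2^{10}$ carries such a weight, so $\Lambda(E_\a)E_\b=0$. For $\a\in R_{\gn_i}^+$, $\g\in R_{\gn_j}^+$ with $j\neq i$, the weight $\a+\g$ is nontrivial on both $\gs_1$ and $\gs_2$, excluding every weight vector of $\gm^{10}$. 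The vanishing of $\Lambda(E_\a)E_{-\g}$ follows from (iii) combined with $[\gg_1,\gg_2]=0$, and $\Lambda(E_\a)\bar H=[E_\a,\bar H]^{01}=0$ because $[E_\a,\bar H]$ is a scalar multiple of $E_\a\in\gm^{10}$. For part (b), $\Lambda(E_\a)E_{-\b}=[E_\a,E_{-\b}]^{01}$ by (iii); if $\b\neq\a$ the bracket lies in $(\gh_i^s)^c\subseteq\gl^c$, whose $\gm$-component vanishes, while for $\b=\a$ the bracket equals $H_\a$, whose $\gm$-projection is $-\sqrt{-1}/(2n_i)\,Z_i$ as already computed in the SKT lemma.

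For part (c), (i) and (ii) force $\Lambda(E_\a)H=c\,E_\a$ for some $c\in\mathbb C$. Applying (iv) with $Y=H$ and $Z=\bar E_\a$ gives
\begin{equation*}
0=h(\Lambda(E_\a)H,\bar E_\a)+h(H,\Lambda(E_\a)\bar E_\a)=c\,g_i+h\!\left(H,\tfrac{\sqrt{-1}}{2n_i}(Z_i)^{01}\right),
\end{equation*}
where the second term is evaluated via (iii) and part (b) in the form $\Lambda(E_\a)\bar E_\a=-\Lambda(E_\a)E_{-\a}=(\sqrt{-1}/(2n_i))(Z_i)^{01}$. Since $h$ satisfies $h(\gm^{10},\gm^{10})=0$, one has $h(H,(Z_i)^{01})=h(H,Z_i)$, and rewriting $Z_i$ via $(H_\a)_\gm=-\sqrt{-1}/(2n_i)\,Z_i$ yields $c=h(H,H_\a)/g_i$. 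For part (d), applying (iii) and its conjugate to $v\in\gt^c$ immediately shows $\Lambda(v)E_\a=[v,E_\a]^{10}=\operatorname{ad}(v)E_\a$ on each $\gn_i^c$ and similarly on $\gm^{01}$; on $\gt^c$, weight zero together with (iv) and $[\gt,\gt]=0$ forces $\Lambda(v)|_{\gt^c}=0=\operatorname{ad}(v)|_{\gt^c}$.

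The calculation is largely bookkeeping of the four constraints; the only step requiring mild external input is (iv), which I would first record as a short lemma derived from $\nabla h=0$ via Nomizu's formula. The scalar identification in (c) is the most delicate point and relies on the explicit form of $(H_\a)_\gm$ established in the SKT argument.
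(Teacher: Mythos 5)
Your overall toolkit (the torsion identity \eqref{AbarB}, its conjugate, and the $h$-skewness of $\Lambda(X)$ coming from $\nabla h=0$) is the right one, and parts (b), the scalar evaluation in (c), and most of (d) coincide with the paper's computation. But the step you lean on for the first two claims of (a) and for the crucial proportionality $\Lambda(E_\a)H=c\,E_\a$ in (c) — the weight argument — has a genuine gap. The Nomizu map $\Lambda$ is only $\Ad(L)$-equivariant, so the only weights you may compare are restrictions of roots to $\gs_1\oplus\gs_2$, exactly as you state in (i); yet you then evaluate the weight on $Z_i$ (``$(\a+\b)(Z_i)=2\sqrt{-1}$ is not a root''), which is not part of the $\gl$-weight: $Z_i$ spans $\gt\subset\gm$, it is not in the isotropy algebra, and no $\ad(Z_i)$-equivariance of $\Lambda$ has been established. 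If you use only $\gs_1\oplus\gs_2$-weights, the conclusion fails inside the class $\mathcal C$: when a factor has $\gh_i^s=\{0\}$, i.e. $G_i/T^1\cdot H_i=\SU(2)/T^1=\mathbb{CP}^1$ (the original Calabi--Eckmann case $n_i=1$), every restricted weight is zero, so nothing forces $\Lambda(E_\a)E_\b=0$ or $\Lambda(E_\a)H\in\mathbb C\,E_\a$; and even in higher rank, e.g. $G_i/T^1\cdot H_i=\Sp(2)/\U(2)$, the noncompact root $\e_1+\e_2$ vanishes on $\gs_i$ (it is orthogonal to the compact root $\e_1-\e_2$), so with $\a=2\e_1$, $\b=2\e_2$ the restricted weight $(\a+\b)|_{\gs_1\oplus\gs_2}=0$ coincides with the weight of $H$ and of $E_{\e_1+\e_2}\in\gn_i^{10}$. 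The same objection hits your claims ``$\a+\g$ is nontrivial on both $\gs_1$ and $\gs_2$'' and ``weight zero \dots forces $\Lambda(v)|_{\gt^c}=0$''. (One can in fact show that for the diagonal metrics under consideration $\Ad(\exp tZ_i)$ preserves $h$, $J$ and the reductive splitting, so that $\Lambda$ is $\ad(Z_i)$-equivariant after all, but that is an extra argument you would have to supply; it is not an instance of $\Ad(L)$-equivariance.)

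The statement is of course true, and your ingredients (iii) and (iv) suffice without any weight count — this is how the paper argues. For (a) and for the first step of (c), pair the unknown vector against a basis of $\gm^{01}$: skewness gives $h(\Lambda(E_\a)E_\b,\bar A)=-h(E_\b,\Lambda(E_\a)\bar A)=-h(E_\b,[E_\a,\bar A]^{01})$, and this vanishes because $[\gn_i,\gn_i]\subseteq\gh_i$ forces the $\gm$-part of $[E_\a,\bar A]$ into $\mathbb C\,Z_i\subset\gt^c$, while $[\gg_1,\gg_2]=0$, $h(\gt,\gn)=0=h(\gn_1,\gn_2)$ and $h(\gm^{10},\gm^{10})=0$; the pairing with $\bar H$ uses $[E_\a,\bar H]\in\mathbb C\,E_\a$. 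The same computation shows $h(\Lambda(E_\a)H,\bar A)=0$ for every $\bar A$ except $\bar A\in\mathbb C\,E_{-\a}$, and this — not a weight count — is what justifies $\Lambda(E_\a)H=c\,E_\a$; your evaluation of $c$ is then correct. Finally, in (d) note that $\Lambda(H)E_\a$ (both arguments of type $(1,0)$) is not covered by \eqref{AbarB} or its conjugate, so it too requires the skewness/pairing step rather than being ``immediate''.
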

\begin{proof} a) Given $A\in \gn^{10}$ we have by \eqref{AbarB} and by $[\gn_i,\gn_i]\subseteq \gh_i$ 
$$h(\Lambda(E_\a)E_\b,\bar A) = -h(E_\b,\Lambda(E_\a)\bar A) = -h(E_\b,[E_\a,\bar A]) = 0.$$
Moreover, 
$$h(\Lambda(E_\a)E_\b,\bar H) = -h(E_\b,\Lambda(E_\a)\bar H) = -h(E_\b,[E_\a,\bar H]) = h(E_\b,\a(\bar H)E_\a)=0.$$
The same kind of arguments shows the second assertion. Finally by \eqref{AbarB}, $\Lambda(E_\a)\bar H = [E_\a,\bar H]^{01} = \a(\bar H)E_\a^{01}=0$.\par 
\noindent b) We have $\Lambda(E_\a)E_{-\b} = [E_\a,E_{-\b}]^{01} \neq 0$ only if $\a=\b$ and in this case $\Lambda(E_\a)E_{-\a}=
[E_\a,E_{-\a}]^{01}= [H_\a]^{01}$. Now 
$$ [H_\a]^{01} = \frac{B_i(H_\a,Z_i)}{B_i(Z_i,Z_i)} Z_i^{01} = -\frac {\sqrt{-1}}{2n_i}Z_i^{01}.$$\par 
\noindent c) We have $h(\Lambda(E_\a)H,E_{-\a}) = - h(H,H_\a^{01}) = - h(H,H_\a)$ and our claim follows.\par 
\noindent d) First note that $\Lambda(H)\bar H = [H,\bar H]^{01} = 0$ and $\Lambda(H)E_{-\a}=-\a(H)E_{-\a}$ by \eqref{AbarB}. From this we see that $\Lambda(H)H=0$. Now 
$$h(\Lambda(H)E_\a,\bar H) = -h(\Lambda(H)\bar H,E_\a)=0,$$
$$h(\Lambda(H)E_\a,E_{-\b}) = -h(E_\a,-\b(H)E_{-\b}) = \d_{\a,\b}\a(H) h(E_\a,E_{-\a})$$
so that $\Lambda(H)E_\a= \a(H)E_\a$. From this we see that $\Lambda(\bar H)E_\a = -\overline{\a(H)}E_\a$. Since $\a|_{\gt}\in \sqrt{-1}\mathbb R$, we see that $\overline{\a(v)} = -\a(\bar v)$ for $v\in \gt^c$, hence $\Lambda(\bar H)E_\a= \a(\bar H)E_\a$. Similarly for $\Lambda(\bar H)E_{-\a} = -\a(\bar H)E_{-\a}$.\end{proof}
In order to compute the second Ricci tensor $S^{(2)}$ ($S$ for brevity throughout the following), we compute the curvature. We use the general formula for the curvature of an invariant metric (see e.g. \cite{KN}, p. 192) for $v,w\in \gm$
\beq\label{hcurv}R(v,w) = [\Lambda(v),\Lambda(w)] - \Lambda([v,w]_{\gm}) - \ad([v,w]_{\gl}).\eeq
Given $\a,\b\in R_{\gn_i}^+$ we have 
\beq\label{curv}R(E_{\a},\bar E_{\a})E_\b = [\Lambda(E_\a),\Lambda(\bar E_\a)]E_\b - \Lambda((H_\a)_{\gm^c})E_\b - [(H_\a)_{\gl^c},E_\b]=\eeq
$$= \Lambda(E_\a)\Lambda(\bar E_\a)E_\b - \Lambda((H_\a)_{\gm^c})E_\b - [(H_\a)_{\gl^c},E_\b]=$$
$$= \Lambda(E_\a)\Lambda(\bar E_\a)E_\b - \b(H_\a)E_\b,$$
where we have used Lemma \ref{Lambda}, (a)(d). Note also that $R(H,\bar H)E_\a=0$. \par 

Using Lemma \ref{Lambda} we see that for $\b\in R_{\gn_i}$
$$S(E_\b,\bar E_\b) = \sum_{\a\in R_{\gn_i}^+}\frac 1{g_i}h(R(E_\a,\bar E_\a)E_\b,\bar E_\b) + \frac 1{h_o}h(R(H,\bar H)E_\b,\bar E_\b) = $$
$$= -\sum_{\a\in R_{\gn_i}^+}\frac 1{g_i}h(\Lambda(\bar E_\a)E_\b,\Lambda( E_\a)\bar E_\b) -\sum_{\a\in R_{\gn_i}^+} \b(H_\a)= $$
$$ = -\frac 1{g_i}h(\Lambda(\bar E_\b)E_\b,\Lambda( E_\b)\bar E_\b) -\sum_{\a\in R_{\gn_i}^+} \b(H_\a)=$$
$$ = -\frac 1{g_i}h(H_\b^{01},\overline{H_\b^{01}}) -\sum_{\a\in R_{\gn_i}^+} \b(H_\a)=$$
$$ = \frac 1{g_i}h(H_\b^{01},H_\b^{10}) -\sum_{\a\in R_{\gn_i}^+} \b(H_\a)=$$
$$ = -\frac{1}{4g_in_i^2} h(Z_i^{10},Z_i^{01}) -\sum_{\a\in R_{\gn_i}^+} \b(H_\a)= -\frac 1{8g_in_i^2}h(Z_i,Z_i) + \frac 12,$$

where we have used that $\sum _{\a\in R_{\gn_i}^+} H_\a = 
-\frac {\sqrt{-1}}{2}Z_i$ and $\b(Z_i)=\sqrt{-1}$. Now it is immediate to see that 
$$h(Z_1,Z_1)=\frac 12 h_o,\qquad h(Z_2,Z_2)= \frac{1+a^2}{b^2}h(Z_1,Z_1) = \frac{1+a^2}{2b^2}h_o,$$
so that
\beq\label{S1} S(E_\b,\bar E_\b) =  -\frac {h_o}{16g_1n_1^2} + \frac 12,\qquad \b\in R_{\gn_1},\eeq
\beq\label{S} S(E_\b,\bar E_\b) =  -\frac {(1+a^2)h_o}{16b^2g_2n_2^2} + \frac 12,\qquad \b\in R_{\gn_2},\eeq
\par 
We now compute 
$$S(H,\bar H) = \sum_{i=1}^2\sum_{\a\in R_{\gn_i}^+}\frac 1{g_i}h(R(E_\a,\bar E_\a)H,\bar H) =$$
$$= -\sum_{i=1}^2 \sum_{\a\in R_{\gn_i}^+}\frac 1{g_i}h(\Lambda(\bar E_\a)\Lambda(E_\a)H,\bar H) = 
\sum_{i=1}^2 \sum_{\a\in R_{\gn_i}^+}\frac 1{g_i}h(\Lambda(E_\a)H,\Lambda(\bar E_\a)\bar H) =$$
$$= \sum_{i=1}^2 \sum_{\a\in R_{\gn_i}^+}\frac 1{g_i^2}|h(H,H_\a)|^2.$$
Now it is immediate to see that 
$$h(H,H_\a) = -\frac{\sqrt{-1}}{2n_1}h(Z_1,Z_1),\qquad \a\in R_{\gn_1}^+,$$
$$h(H,H_\a) = \frac{\sqrt{-1}}{2n_2}(-\frac ab - \frac{\sqrt{-1}}b)h(Z_1,Z_1),\qquad \a\in R_{\gn_2}^+$$
so that 
$$S(H,\bar H) = \frac{h_0^2}{16}\left( \frac{1}{g_1^2n_1} + \frac{1+a^2}{b^2g_2^2n_2}\right).$$

Summing up the Hermitian Einstein equations are 
\beq\label{eq} \left\{\begin{matrix}  \frac{h_0}{16}\left( \frac{1}{g_1^2n_1} + \frac{1+a^2}{b^2g_2^2n_2}\right) &= 1 \\
-\frac{h_o}{16n_1^2g_1} + \frac 12 &= g_1 \\ 
-\frac {(1+a^2)h_o}{16b^2g_2n_2^2} + \frac 12 &= g_2.    \end{matrix}\right.
\eeq 
Putting $x:= 1/g_1$, $y:= 1/g_2$ and $z:= 16/h_o$, the system \eqref{eq} can be written as 
\beq\label{eq} \left\{\begin{matrix}  z &=\frac 1{n_1}x^2 + \frac {1+a^2}{b^2n_2} y^2  \\
z(x-2)  &= \frac{2}{n_1^2} x^2 \\ 
z(y-2) &= \frac{2(1+a^2)}{b^2n_2^2} y^2     \end{matrix}\right.
\eeq 
which is equivalent to 
\beq\label{eq1} \left\{\begin{matrix}  z &=\frac 1{n_1}x^2 + \frac {1+a^2}{b^2n_2} y^2  \\
z(x-2)  &= \frac{2}{n_1^2} x^2 \\ 
n_1 x + n_2 y &= 2n_1+2n_2 + 2 .    \end{matrix}\right.
\eeq 
We have an admissible solution $x,y,z\in \mathbb R^+$ if and only if there is a solution $x$ of the polynomial equation 
\beq\label{eq}\phi(x):= \left[ \frac 1{n_1}x^2 + \frac {1+a^2}{b^2n_2}(\frac{2n_1+2n_2+2-n_1x}{n_2})^2\right]\cdot (x-2)- \frac 2{n_1^2}x^2 = 0\eeq
satisfying the conditions 
$$x> 0,\qquad x < \frac {2n_1+2n_2+2}{n_1}.$$
This follows immediately from the fact that 
$$\phi(0)<0,\qquad \phi(\frac {2n_1+2n_2+2}{n_1}) = \frac{8n_2(n_1+n_2+1)^2}{n_1^4} > 0.$$
We now put $\frac {1+a^2}{b^2}=1$ throughout the following and prove the uniqueness of the Hermitian Einstein metric. \par 
\begin{lemma} Any solution of the equation $\phi(x)=0$ satisfies $x\in [2,2+\frac 2{n_1}]$. If there are two distinct solutions $x_1<x_2$, then the equation 
$\phi'(x)=0$ has two distinct solutions $y_1<y_2$ in $[2,2+\frac 2{n_1}]$.
\end{lemma}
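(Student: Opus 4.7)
My plan is to prove the containment by a direct sign analysis and then to derive the two zeros of $\phi'$ from Rolle's theorem together with the cubic nature of $\phi$.

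First, I would regroup
\[
\phi(x)=\frac{x^2}{n_1}\Bigl(x-2-\tfrac{2}{n_1}\Bigr)+\frac{(c-n_1x)^2}{n_2^3}(x-2),\qquad c:=2n_1+2n_2+2,
\]
by absorbing $-\frac{2x^2}{n_1^2}$ into the $\frac{x^2}{n_1}(x-2)$ summand of the original bracket. For $x\geq 2+\tfrac{2}{n_1}$, both summands in this form are nonnegative, and they cannot vanish simultaneously: the first vanishes only at $x=2+\tfrac{2}{n_1}$, while the second vanishes only at $x=c/n_1=2+\tfrac{2(n_2+1)}{n_1}>2+\tfrac{2}{n_1}$. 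Hence $\phi(x)>0$. For $x\leq 2$, I would use the original form $\phi(x)=B(x)(x-2)-\frac{2x^2}{n_1^2}$ with $B(x):=\frac{x^2}{n_1}+\frac{(c-n_1x)^2}{n_2^3}>0$ (the two summands of $B$ cannot both vanish since $c\neq 0$): both terms of $\phi$ are then nonpositive, and they cannot simultaneously vanish (the first requires $x=2$, the second $x=0$), whence $\phi(x)<0$. Therefore every real root of $\phi$ lies in the open interval $(2,2+\tfrac{2}{n_1})$, a fortiori in $[2,2+\tfrac{2}{n_1}]$.

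For the second claim, I exploit that $\phi$ is a real cubic with positive leading coefficient $\tfrac{1}{n_1}+\tfrac{n_1^2}{n_2^3}$. Suppose $x_1<x_2$ are two distinct zeros; by the first part they lie in $(2,2+\tfrac{2}{n_1})$. If $\phi$ admits a third distinct real zero $x_3$ (necessarily in the same interval), Rolle's theorem applied to $[x_1,x_2]$ and to $[x_2,x_3]$ yields two distinct zeros of $\phi'$ in $(2,2+\tfrac{2}{n_1})$. Otherwise $\phi$ has exactly two distinct real zeros, and since the sum of multiplicities equals three, one of them, say $x_i$, is a double zero; then $\phi'(x_i)=0$ automatically, and Rolle's theorem on $[x_1,x_2]$ produces a second zero of $\phi'$ in the open interval $(x_1,x_2)$, necessarily distinct from the endpoint $x_i$. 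In both cases $\phi'$ has two distinct zeros in $[2,2+\tfrac{2}{n_1}]$.

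The main technical step is the algebraic regrouping together with the verification that, in each of the two forms, the two summands never vanish simultaneously on the relevant half-line; once this is in hand, the remainder of the argument is a straightforward application of Rolle's theorem to a cubic polynomial, and I anticipate no further obstacle.
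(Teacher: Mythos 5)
Your argument is correct and essentially the paper's own: your regrouping $\frac{x^2}{n_1}\bigl(x-2-\frac{2}{n_1}\bigr)+\frac{(c-n_1x)^2}{n_2^3}(x-2)$ encodes the same inequality $\frac{2}{n_1^2}x^2\ge\frac{1}{n_1}x^2(x-2)$ that the paper extracts at a root to force $x\in[2,2+\frac{2}{n_1}]$, and your Rolle-plus-multiplicity discussion is the natural expansion of the paper's remark that the second claim follows because $\phi'$ has degree $2$. The only cosmetic point is that when a third distinct zero exists you should relabel the three roots in increasing order before applying Rolle on consecutive pairs (your $x_3$ need not exceed $x_2$), which changes nothing of substance.
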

\begin{proof} It is immediate to see that $\phi(2)<0$ and $\phi(2+\frac 2{n_1}) = \frac 8{n_1n_2} > 0$. Moreover any solution $x$ satisfies 
$$0 < \frac 2{n_1^2}x^2 =  \left[ \frac 1{n_1}x^2 + (\frac{2n_1+2n_2+2-n_1x}{n_2})^2\right]\cdot (x-2) \geq \frac 1{n_1}x^2 \cdot (x-2), $$
hence $x\in [2,2+\frac 2{n_1}]$. The second claim follows immediately from the fact that $\phi'$ is a polynomial of degree $2$. \end{proof}
If we suppose that there are two distinct solutions $x_1,x_2$ of \eqref{eq}, then by the previous Lemma we get $2< \frac 12(y_1+y_2)< 2+\frac 2{n_1}$. Using Maple, this last condition is given by 
$$\left\{ \begin{matrix} 2 n_1 n_2^3 - 2 n_2n_1^3 - n_2^3 - 2 n_1^3 &\leq 0\\ 
2 n_2 n_1^3 - 2 n_1n_2^3 - n_1^3 - 2 n_2^3 &\leq 0\end{matrix}\right.$$
that can be rewritten as 
\beq\label{diseq}\left\{ \begin{matrix} \frac{n_1^3}{n_1+1} &\leq 2 \frac{n_2^3}{2n_2-1}\\ 
\frac{n_2^3}{n_2+1} &\leq 2 \frac{n_1^3}{2n_1-1}\end{matrix}\right.\eeq
Now we observe that if, say, $n_1 = 1$, then $n_2^3\leq 2n_2+2$, giving $n_2=1$ and similarly if $n_2=1$ we get $n_1=1$. The cubic equation $\phi(x)=0$ with $n_1=n_2=1$ can be easily checked to have only one solution, so that we can suppose $n_1,n_2\geq 2$. By \eqref{diseq} we see that 
\beq\label{diseq1}\left\{ \begin{matrix} n_1&\leq \sqrt{2} n_2\\ 
n_2&\leq \sqrt{2} n_1.\end{matrix}\right.\eeq
Now the discriminant $d$ of the equation $\phi'(x)=0$ is given by 
$$0< d:= n_1^6 + n_2^6 + 2 n_1^6 n_2 + 2 n_2^6 n_1 + n_1^6 n_2^2 + n_1^2 n_2^6 + n_1^3 n_2^3 + $$
$$- (8 n_1^4n_2^4 + 3 n_1^3n_2^5 + 3 n_1^5 n_2^3 + 2 n_1^3n_2^4 + 2 n_1^4 n_2^3)  $$
which is a symmetric expression in $n_1,n_2$. We can suppose $n_2\leq n_1$ and using $n_2\geq \frac1{\sqrt 2} n_1$ by \eqref{diseq1}, we see that 
$$0 < d \leq 2 n_1^8 + 4 n_1^7 + 3 n_1^6 - ((2+\frac 9{4\sqrt 2})n_1^8 + (\frac1{\sqrt 2}+\frac 12)n_1^7) =$$
$$= - n_1^6\left( \frac 9{4\sqrt 2} n_1^2 - (\frac 72 - \frac 1{\sqrt 2}) n_1 - 3\right) < 0$$
for $n_1\geq 3$. So we are left with the case $n_1 = n_2 = 2$. In general for $n_1=n_2$ the equation admits only one solution, which is explicitely given by 
$$g_1=g_2 = \frac {n_1}{2n_1+1},\ h_o = \frac {8 n_1^3}{(2n_1+1)^2}.$$
In order to prove (c) in Theorem \eqref{main}, we compute the first Chern-Ricci tensor $\rho$ of an invariant metric $h$. Using the general formula \eqref{hcurv} and Lemma \ref{Lambda},(d),  we see that for $\a\in R_{\gn_1}$,
$$\rho(E_\a,\bar E_\a) = -\sum_{\b\in R_{\gn_1}}\b(H_a) = \frac 12.$$
Similarly for $\a\in R_{\gn_2}$, we see that $\rho(E_\a,\bar E_\a) = \frac 12$. Since $R(H,\bar H)=0$, we have $\rho(H,\bar H)=0$ and therefore $\rho\geq 0$. \par\bigskip
\noindent{\bf The Chern-Ricci flow.} As a last remark, we consider the Chern-Ricci flow 
\beq\label{flow}h_t' = h_t - S(h_t),\eeq
which has a Chern-Einstein metric as an equilibrium point. It is known that there exists a solution for some interval $t\in [0,T)$ for any initial metric $\bar h$. Moreover it is immediate to observe that the solution $h_t$ still has the full group $G$ acting by isometric biholomorphisms. Using a special case 
given by some manifold $M\in \mathcal C$ with $n_1=n_2=2$, we see numerically that the long time existence is not guaranteed and that even when the initial 
 metric $\bar h$ has positive Ricci tensor $S$, the flow does not necessarily converge to the Chern-Einstein metric. 
\section{Invariant balanced metrics}
We keep the same notations as in the previous sections and we consider a complex homogeneous space $M=G/L$ of complex dimension $n$ as in Section 2. We like to study the existence of invariant balanced metrics. We recall that a Hermitian metric $h$ is called {\it balanced} if $d(\o^{n-1})=0$ where $\o$ denotes the K\"ahler form. This definition is actually equivalent to requiring that $\d\o = 0$, where $\d$ denotes the co-differential w.r.t. the metric $h$. \par 
We also recall that if a balanced metric exists, then an invariant balanced metric exists too (see \cite{FG}). We now focus on the possible construction of {\it adapted} balanced metrics on $G/L$,namely metrics which submerge an invariant Hermitian metric on the corresponding flag manifold $G/H$ with $\gt$ and $\gn$ being orthogonal (note that any invariant metric on $G/L$ is of this form whenever $\gh$ coincides with the centralizer in $\gg$ of its semisimple part). The condition of being adapted balanced has been already investigated in \cite{GGP}, Lemma 2; here we give a direct proof using some standard computations on the Levi Civita connection, which might be useful for further research. We start proving the following Lemma, where we denote by $D$ the Levi Civita connection of $h$.
\begin{lemma}\label{lemma1} The metric $h$ is balanced if and only if , given $\{e_i\}_{i=1,\ldots,2n}$ an orthonormal basis of the tangent space $\gm \cong T_{[eL]}M$
we have 
$$\sum_i JD_{e_i}e_i - D_{Je_i}e_i = 0.$$
\end{lemma}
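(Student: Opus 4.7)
The plan is to characterize the balanced condition via the Hodge codifferential $\delta\omega$ of the K\"ahler form $\omega(X,Y)=h(JX,Y)$, since it is classical that $d(\omega^{n-1})=0$ is equivalent to $\delta\omega=0$. I would compute $\delta\omega$ in a local orthonormal frame $\{e_i\}$ obtained by extending an $h$-orthonormal basis of $\gm\cong T_{[eL]}M$ to invariant vector fields on $G/L$, using the standard formula
\[
\delta\omega(X)=-\sum_i (D_{e_i}\omega)(e_i,X).
\]
Expanding with $\omega=h(J\cdot,\cdot)$ and using that $D$ is metric ($Dh=0$) together with the $h$-antisymmetry of $J$, the mixed Leibniz terms cancel and one obtains the tensorial identity
\[
\delta\omega(X)=-h\Big(\textstyle\sum_i (D_{e_i}J)e_i,\;X\Big),
\]
so that $h$ is balanced iff $\sum_i(D_{e_i}J)e_i=0$.

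Next, I would rewrite this tensor in the stated form using the Leibniz rule $(D_{e_i}J)e_i=D_{e_i}(Je_i)-JD_{e_i}e_i$ together with the torsion-free identity $D_{e_i}(Je_i)-D_{Je_i}e_i=[e_i,Je_i]$, which yields
\[
\textstyle\sum_i (D_{e_i}J)e_i\;=\;\sum_i[e_i,Je_i]\;-\;\sum_i\bigl(JD_{e_i}e_i-D_{Je_i}e_i\bigr).
\]
For the invariant vector-field extensions, the bracket sum at $[eL]$ reduces to a purely Lie-algebraic quantity in $\gg$. Using the $J$-invariance of both $\gt$ and $\gn$, the fact that $\gt$ is abelian (so vectors taken from a basis of $\gt$ contribute zero), and choosing the basis of $\gn$ adapted to the root-space decomposition (pairing each real vector with its $J$-image), the $\gm$-projection of $\sum_i[e_i,Je_i]$ vanishes; combined with the previous step this gives the claimed equivalence.

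The main obstacle is the last step, namely the explicit verification that the $\gm$-component of $\sum_i[e_i,Je_i]$ vanishes. A careful root-theoretic bookkeeping is required, because individual summands $JD_{e_i}e_i$ and $D_{Je_i}e_i$ are not tensorial while their combination is. If the direct bracket analysis proves delicate, an alternative is to symmetrize the identity under the substitution $e_i\mapsto Je_i$, exploiting that $\{Je_i\}$ is again orthonormal and that $J^2=-\mathrm{Id}$, so that two copies of the identity combine to eliminate the bracket contribution and yield the desired equivalence directly.
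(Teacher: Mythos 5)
Your opening step is fine and tensorial: since $Dh=0$, $(D_{e_i}\o)(e_i,X)=h((D_{e_i}J)e_i,X)$, so $h$ is balanced iff $\sum_i(D_{e_i}J)e_i=0$. The gap is exactly in the step you flag as the main obstacle, and as you propose to close it the argument fails. If you extend both $e_i$ and $Je_i$ to the Killing fields generated by the corresponding elements of $\gm$, so that the bracket "reduces to a purely Lie-algebraic quantity in $\gg$", then for a basis adapted to the root decomposition ($e_\a=(E_\a-E_{-\a})/\sqrt{2g_\a}$, $Je_\a=i(E_\a+E_{-\a})/\sqrt{2g_\a}$) one gets $[e_\a,Je_\a]=\tfrac{\sqrt{-1}}{g_\a}H_\a$ and hence
\[
\sum_i[e_i,Je_i]\;=\;2\sqrt{-1}\sum_{\a\in R_\gn^+}\tfrac1{g_\a}H_\a ,
\]
whose $\gm$-projection is precisely the vector $\d_h|_{\gt^c}$ appearing in \eqref{bal}: it is the balancedness obstruction itself, not zero (for the manifolds in $\mathcal C$ it is never zero, since no balanced metric exists). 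So no root-theoretic bookkeeping can make it vanish. Moreover, under this interpretation your chain is inconsistent: the Leibniz identity $(D_{e_i}J)e_i=D_{e_i}(Je_i)-JD_{e_i}e_i$ uses $J$ applied pointwise to the extension of $e_i$, which away from $[eL]$ differs from the Killing field generated by $Je_i$, so the torsion identity you invoke refers to a different field. Your fallback also fails: under $e_i\mapsto Je_i$ the bracket sum is invariant, since $[Je_i,J^2e_i]=[e_i,Je_i]$, so averaging two copies does not cancel it.

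The correct repair is simpler than what you propose and makes the bracket vanish term by term: take $Je_i$ to mean $J$ applied pointwise to the Killing field $e_i^*$. Since $G$ acts holomorphically, $L_{e_i^*}J=0$, hence
\[
[e_i^*,Je_i^*]=(L_{e_i^*}J)e_i^*+J[e_i^*,e_i^*]=0 ,
\]
so $JD_{e_i}e_i-D_{Je_i}e_i=-(D_{e_i}J)e_i$ for each $i$ (note $D_{Je_i}e_i$ only sees the value of $Je_i$ at the point, so the lemma's expression is unaffected), and the equivalence follows at once from your first step. With this fix your route is in substance the paper's proof, which expands $(D_{e_i^*}\o)(e_i^*,v^*)$ directly using $L_{e_i^*}\o=0$, torsion-freeness of $D$, and the Killing-field antisymmetry $h(D_Xe_i^*,Y)=-h(D_Ye_i^*,X)$; the invariance of the structure under the $G$-action is the indispensable ingredient, not any root-space cancellation.
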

\begin{proof} We know that $\d\o(v)=-\sum_i (D_{e_i}\o)(e_i,v)$ for $v\in \gm$. We extend any element of $\gm$ to the corresponding Killing vector field 
which will be denoted by the same letter with $^*$. We have 
$$-(D_{e_i^*}\o)(e_i^*,v^*) = - e_i^*\o(e_i^*,v^*) + \o(D_{e_i^*}e_i^*,v^*) + \o(e_i^*,D_{e_i^*}v^*) = $$
$$= - \o(e_i^*,[e_i^*,v^*]) + \o(D_{e_i^*}e_i^*,v^*) + \o(e_i^*,D_{e_i^*}v^*) = \o(D_{e_i^*}e_i^*,v^*) + \o(e_i^*,D_{v^*}{e_i^*}) = $$
$$= \o(D_{e_i^*}e_i^*,v^*) + h(Je_i^*,D_{v^*}{e_i^*}) = h(JD_{e_i^*}e_i^*,v^*) - h(v^*,D_{Je_i^*}{e_i^*})$$
and our claim follows.\end{proof}

We compute the Levi Civita connection using the standard formula (see e.g. \cite{KN}) for $v,w,z\in \gm$
\beq\label{LC} 2h(D_vw,z) = h([v,w]_\gm,z) + h([z,v]_\gm,w) + h([z,w]_\gm,v).\eeq
We immediately see that for every $v,w\in \gt$ we have $D_vw=0$ because $\ad(v)(\gn)\subseteq \gn$ and $h(\gt,\gn)=0$. For every $\a\in R_\gn^+$ we consider the vectors $e_\a:= \frac{E_\a-E_{-\a}}{\sqrt {2 g_\a}}$ so that $\{e_\a,Je_\a\}_{\a\in R_\gn^+}$ gives an orthonormal basis of $\gn$. A simple computation shows that 
\beq\label{D1} D_{e_\a}e_\a + D_{Je_\a}Je_\a = -\frac 1{g_\a}\left[ D_{E_\a}E_{-\a}+ D_{E_{-\a}}E_\a\right],\qquad  D_{Je_\a}e_\a - D_{e_\a}Je_\a = \frac{i}{g_\a}\left[D_{E_{-\a}}E_{\a} - D_{E_\a}E_{-\a} \right].\eeq
Now, using \eqref{LC} we see that for every root $\b\in R_\gn$, $v\in \gt^c$
$$h(D_{E_\a}E_{-\a},E_\b) = 0,\qquad h(D_{E_\a}E_{-\a},v) = \frac 12 h(H_\a,v),$$
so that 
$$D_{E_\a}E_{-\a} = \frac 12(H_\a)_{\gt^c},\qquad D_{E_{-\a}}E_{\a} = \overline{D_{E_{\a}}E_{-\a}} = - \frac 12 (H_\a)_{\gt^c}.$$
Therefore $\sum_{i=1}^{2n} D_{e_i}e_i = 0$ by \eqref{D1} and the condition in Lemma~\ref{lemma1} becomes $\sum_{i=1}^{2n} D_{Je_i}e_i = 0$. Therefore by \eqref{D1}, $h$ is balanced if and only if 
\beq\label{bal} \sum_{\a\in R_\gn^+} \frac 1{g_\a} H_\a|_{\gt^c} = 0\qquad\mbox{or\ equivalently}\quad \sum_{\a\in R_\gn^+} \frac 1{g_\a} H_\a \in \sqrt{-1}\gl\eeq
We define the vector 
\beq\label{def} \d_h:= \sum_{\a\in R_\gn^+} \frac 1{g_\a} H_\a\eeq
and note that $\d_h$ is a slight modification of the standard Koszul element $\d_{\kappa}:= \frac 12 \sum_{\a\in R_\gn^+} H_a$ which lies in $\sqrt{-1}\gz$, where $\gz$ is the center of $\gh$. Indeed, we can prove that $\d_h$ is a non zero vector in  $ \sqrt{-1}\gz$ by the following arguments. First of all we decompose $\gn^{1,0}=\oplus_{j=1}^s \gq_i$ as a sum of irreducible $\gh$-modules $\gq_j$, $j=1,\ldots,s$. Note that there exist $R_j\subset R_\gn^+$ with $\gq_j = \bigoplus_{\a\in R_j}\gg_\a$, $j=1,\ldots,s$. We prove the following 
\begin{lemma} Given  $\zeta_j:= \sum_{\a\in R_j}H_\a$, then $\sqrt{-1}\zeta_j\in\gz$.\end{lemma}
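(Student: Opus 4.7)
The plan is to show that $\zeta_j$ lies in the complexification of the centre of $\gh$ by characterising it as the $B$-orthogonal projection of a sum of Cartan elements onto that centre. Equivalently, it suffices to show that $\zeta_j$ is Killing-orthogonal to a Cartan subalgebra $\gs$ of the semisimple part $\gh^{ss}:=[\gh^c,\gh^c]$.

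First I would observe that, since $\gq_j$ is an $\ad(\gh)$-invariant subspace of $\gn^{1,0}$, it is in particular a finite-dimensional representation of the complex semisimple Lie algebra $\gh^{ss}$. Because $\gh^{ss}=[\gh^{ss},\gh^{ss}]$, every element of $\gh^{ss}$ is a sum of commutators and therefore acts on $\gq_j$ with vanishing trace. Fixing $Y\in\gs$ and using the root-space decomposition $\gq_j = \bigoplus_{\a\in R_j}\gg_\a$, the resulting trace identity reads
\beq
\sum_{\a\in R_j}\a(Y) \;=\; \Tr\bigl(\ad(Y)|_{\gq_j}\bigr) \;=\; 0 \qquad \text{for all } Y\in\gs.
\eeq
Translating via the Killing-form duality $\a(Y)=B(H_\a,Y)$, this says $B(\zeta_j,Y)=0$ for every $Y\in\gs$.

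Next I would identify the $B$-orthogonal complement of $\gs$ inside the ambient Cartan subalgebra $\ga\supset\gs$ of $\gg^c$ with $\gz^c$. As vector spaces one has $\ga = \gz^c\oplus\gs$, and the required orthogonality is a short trace computation: for $z\in\gz$ and $Y\in\gh^{ss}$, $\ad(z)$ vanishes on $\gh$ while it acts on each $\gh$-isotypic component of $\gn$ as a scalar (Schur's lemma), and $\ad(Y)$ acts on each such component with trace zero (semisimplicity again), hence $B(z,Y)=\Tr(\ad(z)\ad(Y))=0$. Combined with the first step, this forces $\zeta_j\in\gz^c$. The factor of $\sqrt{-1}$ is cosmetic: the roots take purely imaginary values on the compact Cartan, so $H_\a\in\sqrt{-1}(\gz\oplus\gs_\bR)$, and consequently $\sqrt{-1}\,\zeta_j\in\gz$, as claimed.

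I do not foresee a serious obstacle; the argument is a clean application of Schur's lemma together with the tracelessness of the action of a semisimple Lie algebra on a finite-dimensional representation. The only point that deserves a touch of care is the mutual $B$-orthogonality of $\gz$ and $\gh^{ss}$, which, $\gh$ being only reductive, does not follow automatically from the splitting $\gh=\gz\oplus\gh^{ss}$ and requires precisely the short trace computation above.
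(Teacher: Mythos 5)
Your proof is correct, and it reaches the crucial identity by a genuinely different route than the paper. The paper fixes $\g\in R_\gh$ and uses the $\ad(\gh)$-invariance of $\gq_j$ to see that $R_j$ is a disjoint union of complete $\g$-strings; an explicit summation over each string gives $\sum_{\a\in R_j}(\a,\g)=0$, hence $(\zeta_j,H_\g)=0$ for every $\g\in R_\gh$, which is exactly orthogonality of $\zeta_j$ to the Cartan subalgebra of the semisimple part. You get the same orthogonality in one stroke from representation theory: $\gq_j$ is a finite-dimensional module over $\gh^{ss}=[\gh^c,\gh^c]$, every element of which is a sum of commutators and so acts with vanishing trace, whence $\sum_{\a\in R_j}\a(Y)=\Tr\bigl(\ad(Y)|_{\gq_j}\bigr)=0$ for all $Y\in\gs$. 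In effect the paper's string computation is the hands-on verification of your trace identity for $Y$ dual to a root of $\gh$, so your argument is the more conceptual (and more general) one, while the paper's stays entirely within elementary root-string arithmetic; both use only the $\ad(\gh)$-invariance of $\gq_j$, not its irreducibility. Two small points on your write-up. First, the orthogonality $B(\gz,\gh^{ss})=0$ needs no Schur-plus-trace detour: by invariance of the Killing form, $B(z,[A,B'])=B([z,A],B')=0$ for $z\in\gz$ and $A,B'\in\gh$, which immediately makes $\ga=\gz^c\oplus\gs$ a $B$-orthogonal splitting; moreover, as stated your Schur step is slightly imprecise, since $\ad(z)$ is scalar only on irreducible summands, not on a whole isotypic component (the trace argument still survives because there $\ad(z)$ has the form $\mathrm{id}\otimes A$). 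Second, your passage from $\sum_{\a\in R_j}\a(Y)=0$ to $B(\zeta_j,Y)=0$ uses the convention $\a(\cdot)=B(H_\a,\cdot)$; this is consistent with the paper's normalizations elsewhere, but it is worth stating explicitly, since with the coroot convention the translation would not be immediate.
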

\begin{proof} We fix $\g\in R_\gk$ and for every $\b\in R_j$ we consider the maximal $\g$-string $\{\b+k\g,\ p\leq k\leq q\}$. Note that $(R_j\pm\g)\cap R\subseteq R_j$ by the $\Ad(H)$-invariance of $\gn_1$. This means that the whole $\g$-string belongs to $R_j$. Moreover 
$$\sum_p^q (\b+k\g,\g) = (q-p+1)(\b,\g) + \frac 12(q(q+1)+p(1-p)) ||\g||^2 = \frac {||\g||^2}2[-(p+q)(q-p+1) + q(q+1)+p(1-p)] = 0.$$
Since the whole $R_j$ splits up as the disjoint union of $\g$-strings, we can sum up all the scalar products with $\g$ and we get that $(\zeta_j,H_\g)=0$. Since $\sqrt{-1}\zeta_j$ belongs to the Cartan subalgebra of $\gh$ and is orthogonal to every $H_\g$, $\g\in R_\gh$, it lies in the center of $\gh$.\end{proof}
Now it is clear that for every $j=1,\ldots,s$ and for every $\a,\b\in R_j$ we have $g_\a=g_\b$ and this common value will be called $g_j$. We can 
write $\d_h = \sum_{j=1}^s\frac 1{g_j}\zeta_j$ and therefore it lies in  $\sqrt{-1}\gz$. Moreover, since $(\d_\kappa,H_\a)>0$ for every $\a\in R_\gn^+$ (see e.g. \cite{BFR}), we see that $(\d_\kappa,\d_h)>0$ and therefore $\d_h\neq 0$.
Our result is the following 
\begin{theorem} Let $G$ be a compact connected semisimple Lie group. 
\begin{itemize}
\item[i)] Let $M=G/L$ be a compact simply connected complex homogeneous space. Then an adapted $G$-invariant Hermitian metric $h$ on $M$ is balanced if and only if  $\sqrt{-1}\d_h$ lies in the center of $\gl$.
\item[ii)] Let $Q:=G/H$ be a flag manifold with $b_2(Q)\geq 3$. Then there exists a complex homogeneous space $G/L$ with Tits fibration $G/L\to Q$, which admits a balanced metric.
\end{itemize}\end{theorem}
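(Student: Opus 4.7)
The plan is to derive both statements from the balanced criterion \eqref{bal}, combined with the observation that $\sqrt{-1}\delta_h$ always lies in $\gz(\gh)$, the center of $\gh$ (which follows from the preceding Lemma because $\delta_h$ is a positive real combination of the $\zeta_j$).

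For part (i), the key algebraic fact is that $\gh^s:=[\gh,\gh]$ is necessarily contained in $\gl$: since $\gt\subset\gz(\gh)$ one has $[\gh,\gh]=[\gl,\gl]\subseteq\gl$. Combined with $\gh=\gl\oplus\gt$, this produces the splitting $\gl=\gh^s\oplus\gz'$ with $\gz':=\gl\cap\gz(\gh)$ a linear complement to $\gt$ inside $\gz(\gh)$; in particular $\gz(\gl)=\gz'$ because $\gh^s$ is semisimple, so $\gz(\gh)=\gz'\oplus\gt$. Since $\sqrt{-1}\delta_h$ already lies in $\gz(\gh)$, the condition $\sqrt{-1}\delta_h\in\gl$ furnished by \eqref{bal} is equivalent to $\sqrt{-1}\delta_h\in\gl\cap\gz(\gh)=\gz'=\gz(\gl)$, which is exactly the statement of (i).

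For part (ii), the strategy is to tailor $L$ so that $\sqrt{-1}\delta_\kappa\in\gz(\gl)$, and then to take the adapted metric with $g_\alpha$ independent of $\alpha$. For this choice $\sqrt{-1}\delta_h$ is a positive multiple of $\sqrt{-1}\delta_\kappa$, and part (i) immediately yields the balanced condition. To construct such an $L$, I would use that $\sqrt{-1}\delta_\kappa$ is a nonzero rational element of $\gz(\gh)$ (a half-integer combination of coroots, with non-vanishing already established) and that $\dim\gz(\gh)=b_2(Q)\geq 3$: this allows the choice of a codimension-$2$ rational subspace $\gz'\subset\gz(\gh)$ containing $\sqrt{-1}\delta_\kappa$. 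Set $\gl:=\gh^s\oplus\gz'$ and let $\gt$ be any $2$-dimensional complement of $\gz'$ in $\gz(\gh)$. Rationality of $\gz'$ ensures that the analytic subgroup of $G$ with Lie algebra $\gz'$ is a closed subtorus, so $L$ (the connected subgroup with Lie algebra $\gl$) is a closed connected subgroup of $G$; then $G/L$ is simply connected by the long exact sequence of $L\hookrightarrow G\to G/L$, carries the invariant complex structure $J_\gt+I$ recalled in Section~2 for any choice of complex structure $J_\gt$ on $\gt$, and fits into the Tits fibration $G/L\to G/H=Q$ with $2$-torus fiber $H/L$.

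The only non-trivial point in the plan is the choice of $\gz'$: it must be rational with respect to the kernel lattice of the exponential on $\gz(\gh)$ (so that $L$ is closed) while passing through the prescribed rational line $\mathbb{R}\sqrt{-1}\delta_\kappa$. This is standard from the density of rational subspaces in the appropriate Grassmannian, and it is precisely here that the hypothesis $b_2(Q)\geq 3$ is used, as it is what provides enough room for a codimension-$2$ subspace through a prescribed rational line.
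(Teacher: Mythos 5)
Your proof is correct. Part (i) is essentially the paper's own argument: the balanced criterion \eqref{bal} plus the lemma on the $\zeta_j$'s show $\sqrt{-1}\d_h\in\gz$, and your explicit bookkeeping $\gl=\gh^s\oplus(\gl\cap\gz)$, $\gz(\gl)=\gl\cap\gz$ just spells out what the paper leaves implicit. For part (ii) you take a genuinely different route. The paper works in the opposite order: using a system of simple roots adapted to $\gh$ it shows $\{\zeta_1,\dots,\zeta_t\}$ is a basis of $\sqrt{-1}\gz$ with $\zeta_i=\sum_j n_{ij}\zeta_j$ ($n_{ij}\geq 0$) for $i>t$, picks a lattice vector $v\in\Lambda$ with $\sqrt{-1}v=\sum_j c_j\zeta_j$, $c_j>\sum_k n_{kj}$, and then \emph{solves for the metric coefficients} $g_j$ so that $\d_h=\sqrt{-1}v$ exactly; the circle $T$ generated by $v$ is then enlarged to a closed torus $\tilde T$ of even codimension in $Z(H)$ and $L=\tilde T\cdot H_{ss}$. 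You instead \emph{fix the metric} (the normal metric, $g_\a$ constant, so $\d_h$ is a positive multiple of the Koszul element $\d_\kappa$) and build $L$ around the line $\mathbb R\sqrt{-1}\d_\kappa$ via a rational codimension-two subspace $\gz'$, always getting a two-dimensional fiber. Your version avoids the T-root combinatorics and the positivity constraints $c_j>\sum_k n_{kj}$, but it shifts the burden onto the claim that $\sqrt{-1}\d_\kappa$ is rational with respect to the lattice $\Lambda\cap\gz$; this is true but deserves a line ($\d_\kappa$ is a rational combination of coroots, hence lies in the rational span of the exponential lattice of the maximal torus, and $\gz$ is a rational subspace since $Z(H)^0$ is a closed subtorus, so $\sqrt{-1}\d_\kappa$ lies in the rational span of $\Lambda\cap\gz$), whereas the paper's ordering of choices makes no rationality claim about $\d_\kappa$ and in fact shows that essentially any interior rational direction can be realized as $\d_h$ of some adapted metric. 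Two small repairs to your write-up: the existence of a rational codimension-two $\gz'$ through the prescribed line is not a ``density of rational subspaces'' argument (density alone does not give containment of a fixed line); simply extend the rational vector $\sqrt{-1}\d_\kappa$ to a $\mathbb Q$-basis of the rational points of $\gz$ and take the span of it together with $\dim\gz-3$ further basis vectors. And the simple connectedness of $G/L$ from the homotopy sequence requires $\pi_1(L)\to\pi_1(G)$ to be onto, so either pass to the universal cover of $G$ first or note that statement (ii) does not actually require simple connectedness.
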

In order to prove (ii), we recall some standard facts about flag manifolds and $T$-roots(see e.g. \cite{Al}). It is known that there exists a system of simple roots $\{\a_1,\ldots,\a_p,\b_1,\ldots,\b_t\}$ for $\gg$ such that $\{\a_1,\ldots,\a_k\}$ is a system of simple roots for $\gh_{ss}$ and $\b_1,\ldots,\b_t\in R_\gn^+$. Moreover we can reorder the modules $\gn_j$ ($j=1,\ldots,s$) so that $\b_j\in R_j$ for $j=1,\ldots,t$. This implies that 
$\{\zeta_1,\ldots,\zeta_t\}$ is a basis of $\sqrt{-1}\gz$ and there exist non negative integers $n_{ij}$ with 
$\zeta_i = \sum_{j=1}^t n_{ij}\zeta_j$ for $i=t+1,\ldots,s$. Now let $\Lambda$ be the integral lattice in $\gz$ given by the kernel of the exponential map. We can find 
$v\in \Lambda$ so that $\sqrt{-1}v = \sum_{j=1}^t c_j\zeta_j$ with $c_j >0$ for $j=1,\ldots,t$ and, up to a suitable scaling by a positive real number, we can suppose that $c_j>\sum_{k=t+1}^sn_{kj}$ for $j=1,\ldots,s$. We now put $g_i = 1$ for $i=t+1,\ldots,s$ and $\frac1{g_j}= c_j - \sum_{k=t+1}^sn_{kj} >0$, defining an invariant metric $h$ on $G/H$. The corresponding $\sqrt{-1}\d_h$ will therefore generate a one-dimensional line in $\gz$ which integrates to a closed one-dimensional torus $T$ by construction. Since $b_2(Q) = \dim \gz\geq 3$, we can find a torus $\tilde T$ (of dimension $1$ or $2$) with $T\subseteq \tilde T\subset Z(H)$ so that the codimension of $\tilde T$ in $Z(H)$ is even and this gives the isotropy $L = \tilde T\cdot H_{ss}$.\par
As a final remark, we note that the case when $b_2(Q)=2$ has been already treated in full generality in Proposition \ref{bal0}.

\vfil\eject
\bigskip\bigskip

\bigskip\bigskip

\end{document}